\newtheorem{theorem}{Theorem}[section]
\newtheorem{lemma}[theorem]{Lemma}
\newtheorem{proposition}[theorem]{Proposition}
\theoremstyle{definition}
\newtheorem{remark}[theorem]{Remark}
\newtheorem{assumption}[theorem]{Assumption}
\newtheorem{example}[theorem]{Example}
\newcommand{\Xspace}{\mathbb{X}}
\newcommand{\Yspace}{\mathbb{Y}}
\newcommand{\Zspace}{\mathbb{Z}}
\newcommand{\nlOp}{T}
\newcommand{\sol}{f}
\newcommand{\xdag}{\sol^{\dagger}}
\newcommand{\data}{g}
\newcommand{\gobs}{\data^{\rm obs}}
\newcommand{\lsp}{\left\langle}
\newcommand{\rsp}{\right\rangle}
\newcommand{\projparam}{r}
\newcommand{\projindexset}{\mathcal{J}}
\newcommand{\Cq}{C_1}
\newcommand{\Cdiaglow}{C_2}
\newcommand{\Cdiag}{C_3}
\newcommand{\rbra}[1]{\ensuremath{\left( #1 \right)}}
\newcommand{\sbra}[1]{\ensuremath{\left[ #1 \right]}}
\newcommand{\cbra}[1]{\ensuremath{\left\{ #1 \right\}}}
\newcommand{\absval}[1]{\ensuremath{\left\lvert #1 \right\rvert}}
\newcommand{\norm}[2]{\ensuremath{\left\lVert #1 \right\rVert_{#2}}}
\newcommand{\altnorm}[2]{\ensuremath{\left\lVert #1:#2 \right\rVert}}
\newcommand{\snorm}[2]{\ensuremath{\| #1 \|_{#2}}}
\newcommand{\Tnorm}[2]{\ensuremath{\lVert #1 \rVert_{#2}}}
\newcommand{\pairing}[2]{\ensuremath{\left\langle #1,#2 \right\rangle}}
\newcommand{\diffq}[2]{\ensuremath{\frac{\partial #1}{\partial #2}}}
\newcommand{\phiOp}{\Lambda}
\newcommand{\lmax}[2]{#1 \lor #2}
\newcommand{\lmin}[2]{#1 \land #2}
\newcommand{\dd}{\ensuremath{\mathrm d}}
\newcommand{\Rset}{\ensuremath{\mathds R}}
\newcommand{\Nset}{\ensuremath{\mathds N}}
\newcommand{\Zset}{\ensuremath{\mathds Z}}
\newcommand{\Bs}{\ensuremath{B^s_{2,\infty}(\manifold)}}
\newcommand{\XTkappa}{\ensuremath{\Xspace^T_\kappa}}
\newcommand{\EW}[1]{\mathbf{E}\left[#1\right]}
\newcommand{\TEW}[1]{\mathbf{E}[#1]}
\newcommand{\manifold}{\mathcal{M}}
\newcommand{\D}{\,\mathrm{d}}
\newcommand{\paren}[1]{\left( #1\right)}
\newcommand{\braces}[1]{\left\{ #1\right\}}
\newcommand{\bracket}[1]{\left[ #1\right]}
\newcommand{\alphamax}{\ensuremath{\overline{\alpha}}}
\newcommand{\deltaset}{\Delta}
\title[maxisets for spectral regularization]{Characterizations of 
variational source conditions, converse results, and maxisets of spectral regularization methods}
\author{Thorsten Hohage and Frederic Weidling}
\address{Institute for Numerical and Applied Mathematics, University of Goettingen, Lotzestr. 16-18, 37083 Goettingen, Germany}
\email{t.hohage@math.uni-goettingen.de,f.weidling@math.uni-goettingen.de}
\begin{document}

\begin{abstract}
We describe a general strategy for the verification of variational source condition 
by formulating two sufficient criteria describing the smoothness of the solution and 
the degree of ill-posedness of the forward operator in terms of a family of subspaces. 
For linear deterministic inverse problems we show that variational source conditions 
are necessary and sufficient for convergence rates of spectral regularization 
methods, which are slower than the square 
root of the noise level. A similar result is shown for linear inverse problems 
with white noise. In many cases variational source conditions can be 
characterized by Besov spaces. 
This is discussed for a number of prominent inverse problems. 
\end{abstract}

\maketitle

\section{Introduction}\label{sec:intro}
This paper is concerned with inverse problems described by 
ill-posed operator equations in real Hilbert spaces $\Xspace$ and $\Yspace$. 
Let $\nlOp:\Xspace\to \Yspace$ an injective, bounded, linear operator   
and $\xdag\in \Xspace$ the unknown solution to the inverse problem. 
We will study both a deterministic and a white noise noise model. 
In the first case measurement errors are described by a vector $\xi\in\Yspace$, and 
observed data are given by 
\begin{equation}\label{eq:deterministic_noise}
\gobs = T\xdag+\xi,\qquad \|\xi\|\leq \delta
\end{equation}
for some deterministic noise level $\delta>0$. In the second case 
measurement errors are described by a white noise process $W$ 
on $\Yspace$, and observed data are given by 
\begin{equation}\label{eq:white_noise_model}
\gobs = T\xdag+\varepsilon W 
\end{equation}
with a stochastic noise level $\varepsilon>0$. Recall that a white noise 
process is characterized by the relations $\TEW{\langle W,y\rangle}=0$ and 
$\EW{\langle W,y_1\rangle\langle W,y_2\rangle}= \langle y_1,y_2\rangle$ 
for all $y,y_1,y_2\in\Yspace$. 

Regularization theory is concerned with error estimates for approximate 
reconstruction methods (regularization methods) for $\xdag$ given data 
$\gobs$ described by 
\eqref{eq:deterministic_noise} or \eqref{eq:white_noise_model}. 
It is well-known that for ill-posed problems uniform error bounds 
necessarily require further assumptions on the solution $\xdag$ 
(see \cite[Prop.~3.11]{EHN:96}). 
Such conditions are usually called \emph{source conditions}. 
Over the last years, starting with \cite{HKPS:07} it has become increasingly 
popular to formulate such conditions in the form of variational inequalities 
\begin{equation}\label{eq:VSC_innerprod}
2\lsp \xdag,\xdag-\sol\rsp_{\Xspace} \leq \frac{1}{2}\norm{\sol-\xdag}{\Xspace}^2
+\psi\paren{\norm{\nlOp(\sol)-\nlOp(\sol^\dagger)}{\Yspace}^2}
\qquad \mbox{for all } \sol\in\Xspace 
\end{equation}
with an index function $\psi$. (A function $\psi:[0,\infty)\to [0,\infty)$ is 
called an \emph{index function} if it is continuous, strictly monotonically 
increasing, and $\psi(0)=0$.)  
Advantages of these \emph{variational source conditions (VSC)} 
over classical source conditions of the form 
$\xdag = \varphi(T^*T)w$ with an index function $\varphi$ and $w\in\Xspace$ include  
extensions to Banach spaces, general penalty and data fidelity functionals, 
treatment of nonlinear operators without the need of a derivative of $\nlOp$ 
and restrictive assumptions relating $\nlOp'$ and $\nlOp$, as well as 
simpler proofs. As a disadvantage we mention that \eqref{eq:VSC_innerprod} cannot 
be used to describe high order rates of convergence since it is easy to see 
that it cannot hold true for $\xdag\neq 0$ if $\lim_{x\to 0}\psi(x^2)/x= 0$. 
This excludes in particular the case $\psi(t)=t^{\nu}$ with $\nu>1/2$. 

In this paper we will address the following two related main questions:
\begin{itemize}
	\item What are verifiable sufficient (and possibly even necessary) conditions such that the VSC \eqref{eq:VSC_innerprod} holds true?
	\item What are necessary and sufficient conditions on $\xdag$ for a given 
	rate of convergence of a given regularization method as the noise level 
	$\delta$ or $\varepsilon$ tends to $0$? 
\end{itemize}

Let us now discuss known results from the literature and the contributions 
of this paper for both of these questions. Concerning the first question,  verifiable sufficient conditions for \eqref{eq:VSC_innerprod} have mainly been given via spectral 
source conditions so far, see Appendix \ref{sec:spectral_sc} for more details. 
In \cites{HW:15,Weidling2015} we have recently derived sufficient conditions for 
\eqref{eq:VSC_innerprod} in the form of bounds on Sobolev norms of $\xdag$ 
for nonlinear inverse medium scattering problems. 
Here we formulate in Theorem \ref{theo:general_strategy} two criteria 
which capture the main strategy of the proofs in \cites{HW:15,Weidling2015}.  
In the following we will apply them to linear inverse problems. These criteria 
describe the two main factors influencing rates of convergence: Smoothness of 
the solution and ill-posedness of the inverse of the forward operator. 
Here both criteria are formulated in terms of a sequence of approximating 
subspaces in $\Xspace$. If these spaces are chosen as eigenspaces of $T^*T$, 
we obtain an equivalent characterization of the VSC \eqref{eq:VSC_innerprod} 
in terms of the rate of decay of the spectral distribution function of $\xdag$.
The latter criterion has been introduced by Neubauer \cite{Neubauer1997} and 
shown to be necessary and sufficient for H\"older rates of convergence. 
A characterization of the VSC \eqref{eq:VSC_innerprod} for $\psi(t)=t^{\nu}$, $\nu\in (0,1/2]$ has previously been shown 
by Andreev et al.\ \cite{AEdHQS:15} using a different technique 
which seems to be limited to this special case. 



Let us now discuss the second question. In deterministic regularization theory theorems on 
the necessity of conditions for a given rate of convergence (which have already shown 
to be sufficient) are known as \emph{converse results}. 
In statistics the maximal set of $\xdag$ for which a given estimator 
achieves a given desired rate of convergence is called a \emph{maxiset}. 
Converse results for H\"older rates of Tikhonov regularization have been established by 
Neubauer \cite{Neubauer1997}. Andreev \cite{Andreev2015a} has proven converse 
results for H\"older rates of (generalized) Tikhonov regularization and 
Landweber iteration in terms of K-interpolation spaces with fine index $q=\infty$
between $\Xspace$ and $(T^*T)^{\nu}(\Xspace)$ equipped with the image space norm. 
Flemming, Hofmann \& Math{\'e} \cite{FHM:11} have derived converse results 
for general convergence rates of the bias of general spectral regularization methods 
in terms of approximate source conditions. 
Albani et al.\ \cite{Albani2015} proved converse results for general deterministic 
rates and spectral regularization method, but additional assumptions had to be 
imposed, which are not always obvious to interpret. 
Here we will prove converse results without such assumptions. As a byproduct 
of our analysis we show the equivalence of weak and strong quasioptimality 
of a posteriori parameter choice rules in many cases (see \cite{RH:07}). 
Together with our answer to the first question we also obtain converse results 
in terms of VSCs \eqref{eq:VSC_innerprod} for concave $\psi$. 
Moreover, we will show for inverse problems for which the forward operator 
satisfies $T^*T=\Lambda(-\Delta)$ for some Laplace-Beltrami operator $\Delta$,  
that VSCs for certain index functions $\psi$ are satisfied if and only if $\xdag$ belongs 
to a Besov space $B^{s}_{2,\infty}$. This holds true in particular for the backward and 
sideways heat equation, and the inverse gradiometry problem. 

In statistics maxisets of wavelet methods for the estimation of the density 
of i.i.d.\ random variables have been characterized as Besov spaces by Kerkyacharian \& Picard 
\cite{KP:93}. They consider not only $L^2$, but also other $L^p$ norms as loss functions. 
Maxisets of thresholding and more general wavelet estimators have been characterized by the 
same authors in 
\cites{KP:00,KP:02}, and their results have been generalized by Rivoirand \cite{rivoirard:04} 
to some linear estimators in the sequence space model of inverse problems. 
These latter references show in particular that under certain circumstances nonlinear 
thresholding methods have larger maxisets than linear methods for given polynomial rates. 
Here we show under fairly general assumptions that VSCs characterize maxisets of 
spectral regularization methods for linear inverse problems with white noise. 

The plan of this paper is as follows: In the following section we formulate 
and prove the theorem describing our general strategy for the verification 
of VSCs. In sections \ref{sec:converse_bias}--\ref{sec:converse_white} 
we derive converse results for the bias, rates of convergence with 
deterministic noise, and rates of convergence with white noise, respectively. 
In Section \ref{sec:spaceEquivalence} we introduce a class of inverse problems 
for which maxisets of linear spectral regularization methods are given 
by Besov spaces $B^s_{2,\infty}$. Finally, in Section \ref{sec:examples} 
we apply our theoretical results to a number of well-known inverse problem 
before we end this paper 
with some conclusions. In an appendix we show how the general strategy 
in Section \ref{sec:strategy} can be applied to verify a VSC given a spectral 
source condition for linear problems.

\section{A general strategy for verifying variational source conditions}
\label{sec:strategy}

In this section we formulate sufficient conditions for VSCs in terms of arbitrary 
families of subspaces. In the rest of this paper these will always be chosen 
as invariant subspaces of $T^*T$, but in principle the choice is arbitrary. 
To allow also polynomial, trigonometric, wavelet and other subspaces, which 
may be relevant in more general situation (see e.g.\ \cite{HW:15}), we will 
parametrize the spaces by a general index set $\projindexset$. 
\begin{theorem}\label{theo:general_strategy}
Suppose there exists a family of orthogonal projections $P_{\projparam}\in L(\Xspace)$ indexed 
by a parameter $\projparam$ in some index set $\projindexset$ such that for some 
functions $\kappa,\sigma:\projindexset\to (0,\infty)$ and some $C\geq 0$ the following 
conditions hold true for all $\projparam\in\projindexset$:
	\begin{align} \label{eq:approximation}
		&\|\xdag-P_{\projparam}\xdag\|_{\Xspace}\leq \kappa(\projparam) \\
		&\begin{aligned}\label{eq:stability}
			\lsp \xdag,P_{\projparam}(\xdag-\sol)\rsp_{\Xspace}
			\leq  \sigma(\projparam)\|\nlOp(\xdag)-\nlOp(\sol)\| +C \kappa(\projparam)\|\xdag-\sol\|&\\
			\mbox{for all }\sol\in D(\nlOp) \mbox{ with }\|\sol-\xdag\|\leq 4\|\xdag\|&.
		\end{aligned}
	\end{align}
Then $\xdag$ satisfies the VSC \eqref{eq:VSC_innerprod} with
\[
\psi(t):= 2\inf_{\projparam\in\projindexset}\left[(C+1)^2\kappa(\projparam)^2
+ \sigma(\projparam)\sqrt{t}\right].
\]
\end{theorem}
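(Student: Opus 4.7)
The plan is to split the left hand side of the VSC into two pieces using the projection $P_\projparam$, bound each piece with the hypotheses \eqref{eq:approximation} and \eqref{eq:stability}, and then absorb the linear-in-$\|f-\xdag\|$ term via Young's inequality. A separate cheap argument handles the regime where the stability hypothesis is not available.

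First I would take $\sol\in\Xspace$ with $\|\sol-\xdag\|\leq 4\|\xdag\|$ and an arbitrary $\projparam\in\projindexset$, and decompose
\[
\lsp \xdag,\xdag-\sol\rsp_{\Xspace}
= \lsp \xdag,P_\projparam(\xdag-\sol)\rsp_{\Xspace}
+\lsp (I-P_\projparam)\xdag,\xdag-\sol\rsp_{\Xspace},
\]
using that $P_\projparam$ is an orthogonal projection. The second summand is controlled by Cauchy--Schwarz together with \eqref{eq:approximation}, giving the bound $\kappa(\projparam)\|\xdag-\sol\|$, while the first summand is bounded directly by \eqref{eq:stability}. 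Adding the two estimates yields
\[
\lsp \xdag,\xdag-\sol\rsp_{\Xspace}
\leq \sigma(\projparam)\|\nlOp\xdag-\nlOp \sol\|
+(C+1)\kappa(\projparam)\|\xdag-\sol\|.
\]

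Next I would multiply by $2$ and estimate the mixed product $2(C+1)\kappa(\projparam)\|\xdag-\sol\|$ by the weighted Young inequality $2ab\leq 4a^2+\tfrac14 b^2$ with $a=(C+1)\kappa(\projparam)$ and $b=\|\xdag-\sol\|$; a slightly more generous version is $2ab\leq 2a^2+\tfrac12 b^2$, which produces exactly the $\tfrac12\|\sol-\xdag\|^2$ term appearing in \eqref{eq:VSC_innerprod}. This gives
\[
2\lsp \xdag,\xdag-\sol\rsp_{\Xspace}
\leq \tfrac12\|\sol-\xdag\|^2
+ 2(C+1)^2\kappa(\projparam)^2
+ 2\sigma(\projparam)\|\nlOp\xdag-\nlOp \sol\|.
\]
Taking the infimum over $\projparam\in\projindexset$ and noting that $\|\nlOp\xdag-\nlOp \sol\|=\sqrt{\|\nlOp\xdag-\nlOp \sol\|^2}$ yields precisely the VSC with the claimed $\psi$.

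Finally, the case $\|\sol-\xdag\|>4\|\xdag\|$ (outside the validity range of \eqref{eq:stability}) is trivial: Cauchy--Schwarz gives $2\lsp \xdag,\xdag-\sol\rsp_{\Xspace}\leq 2\|\xdag\|\|\xdag-\sol\|\leq \tfrac12 \|\xdag-\sol\|^2$, so \eqref{eq:VSC_innerprod} holds without using $\psi$. The only real subtleties are bookkeeping: selecting the Young constants so that exactly $\tfrac12 \|\sol-\xdag\|^2$ appears and $(C+1)^2\kappa(\projparam)^2$ shows up with the correct factor of $2$, and covering the large-residual regime so that the VSC is obtained globally on $\Xspace$.
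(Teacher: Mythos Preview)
Your proof is correct and essentially identical to the paper's own argument: the same decomposition via $P_\projparam$ and $I-P_\projparam$, the same use of \eqref{eq:approximation} and \eqref{eq:stability}, the same Young inequality $2ab\le 2a^2+\tfrac12 b^2$, and the same Cauchy--Schwarz treatment of the regime $\|\sol-\xdag\|>4\|\xdag\|$. The only difference is that the paper disposes of the large-residual case first rather than last.
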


Assumption \eqref{eq:approximation} may be seen as an estimate of the approximation quality 
of the approximating spaces $P_{\projparam}\Xspace$, and assumption \eqref{eq:stability} 
may be seen as a kind of stability estimate for $\nlOp$ on these spaces. If $\xdag$ belongs to 
some smooth subspace of $\Xspace$, the stability estimate may be taken with respect to 
the norm of the dual space. 
However, \eqref{eq:stability} is not exactly a stability estimate for the restriction 
of $\nlOp$ to $P_{\projparam}\Xspace$ since by do not bound by 
$\|\nlOp(P_{\projparam}\sol)-\nlOp(P_{\projparam}\xdag)\|$, but $\|\nlOp(\sol)-\nlOp(\xdag)\|$. 
On the other hand the additional term $C\kappa(\projparam)\|\sol-\xdag\|$ may help. 
The case $C>0$ and the restriction to $\sol\in\Xspace$ 
with $\|\sol-\xdag\|\leq 4\|\xdag\|$ are also crucial for nonlinear inverse 
problems. 

\begin{proof}[Proof of Theorem \ref{theo:general_strategy}.]
If $\|\sol-\xdag\|> 4\|\xdag\|$ we have 
\begin{align}\label{eq:VSC_arg_great}
2	\lsp \xdag,\xdag-\sol\rsp 
\leq 2\|\xdag\|\;\|\xdag-\sol\| \leq \frac{1}{2} \|\xdag-\sol\|^2,
\end{align}
so \eqref{eq:VSC_innerprod} holds true. Otherwise we can apply \eqref{eq:stability} and \eqref{eq:approximation}
and the basic inequality $2ab\leq 2a^2+\frac{1}{2}b^2$
to obtain 
\begin{align*}
	2\lsp \xdag,(\xdag-\sol)\rsp 
&= 2\lsp \xdag, P_{\projparam}(\xdag-\sol)\rsp 
+ 2\lsp (I-P_{\projparam})\xdag,\xdag-\sol\rsp\\
&\leq 2\sigma(\projparam) \|\nlOp(\sol)-\nlOp(\xdag)\| + 2(C+1) \kappa(\projparam)\|\xdag-\sol\| \\
&\leq 
2\sigma(\projparam) \|\nlOp(\sol)-\nlOp(\xdag)\| + 2(C+1)^2 \kappa(\projparam)^2
+ \frac{1}{2}\|\xdag-\sol\|^2
\end{align*}
for all $\projparam\in\projindexset$. Taking the infimum over of the right hand side over 
$\projparam\in\projindexset$ with $t = \|\nlOp(\sol)-\nlOp(\xdag)\|^2$ yields 
\eqref{eq:VSC_innerprod}. 
\end{proof}

\section{Converse results for the bias}\label{sec:converse_bias}
For $\lambda\geq 0$ we define the spectral projections 
\begin{equation}
	E^{T^*T}_{\lambda}  := \chi_{[0,\lambda]}(T^*T).
\end{equation}
The function $\lambda\mapsto \|E^{T^*T}_{\lambda}f\|$ is called the 
\emph{spectral distribution function} of $f\in\Xspace$. 
For an index function $\kappa$ we define a subspace 
$\XTkappa\subset \Xspace$ via a weighted supremum norm of the spectral distribution 
function with weight $1/\kappa$:
\begin{equation}\label{eq:decaySpace}
	\begin{aligned}
		\XTkappa:=& \cbra{f \in\Xspace \colon \norm{f}{\XTkappa} < \infty},
		\qquad \qquad 
		\norm{f}{\XTkappa}:= \sup_{\lambda>0} \frac{1}{\kappa(\lambda)} \norm{E^{T^*T}_{\lambda} f}{\Xspace}
	\end{aligned}
\end{equation}
It is not difficult to see that $\XTkappa$ is a Banach space. Note that 
the unit ball in 
$\XTkappa$ contains all functions satisfying \eqref{eq:approximation} with 
$P_{\projparam}=I-E^{T^*T}_{\projparam}$.
For the remainder of this and the following two sections 
we will suppress the superscript $T^*T$ to simplify the notation. 
However, we will need it in Section \ref{sec:spaceEquivalence} to deal with 
spectral distribution functions w.r.t.\ several operators simultaneously.

\begin{theorem}\label{thm:equiv_vsc_decay}
Let $\kappa:[0,\infty)\to[0,\infty)$ be an index function 
such that $t\mapsto \kappa(t)^{2}/t^{1-\mu}$ is decreasing for some 
$\mu\in (0,1)$ and $\kappa\cdot\kappa$ is concave. Moreover, we associate 
with each such $\kappa$ an index function $\psi_{\kappa}$ by 
\begin{equation}\label{eq:kappa_psi}
\psi_{\kappa}(t):= \kappa\paren{\Theta_{\kappa}^{-1}(\sqrt{t})}^2, 
\qquad \Theta_{\kappa}(\lambda):=\sqrt{\lambda}\kappa(\lambda).
\end{equation}
Then the following statements for $\xdag\in\Xspace$ are equivalent: 
\begin{enumerate}
	\item\label{item:VSC}
	$\xdag$ satisfies a VSC with $\psi(t)=A\psi_{\kappa}(t)$	for some $A>0$. 
	\item\label{item:boundedSpectral} $\snorm{\xdag}{\Xspace^T_\kappa}<\infty$.
\end{enumerate}
More precisely, (\ref{item:VSC}) implies 
$\|E_\lambda\xdag\|\leq \sqrt{\frac{2A}{3}}\kappa\paren{\frac{2A}{3}\lambda} 
\leq  \sqrt{\frac{2A}{3}}\max(1, \sqrt{\frac{2A}{3}})\kappa(\lambda)$.  
Vice versa, if $\kappa$ is normalized such that 
$\snorm{\xdag}{\Xspace^T_\kappa}=1$, then (\ref{item:VSC}) holds true with 
$A=2(1+\mu^{-1})+ 2\kappa(\|T\|^2)\sup_{t\in(0,4\Tnorm{T}{}\Tnorm{\xdag}{}]} \sqrt{t}/\psi_{\kappa}(t)$, 
and $A$ is finite. 
\end{theorem}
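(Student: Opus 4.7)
My plan is to treat the two implications separately using the spectral projections $E_\lambda := \chi_{[0,\lambda]}(T^*T)$, and I begin with a preparatory observation that both directions rely on. Writing $\phi := \kappa\cdot\kappa$, the hypotheses make $\phi$ concave, strictly increasing and vanishing at $0$; consequently $\phi^{-1}$ is convex, strictly increasing, and strictly positive on $(0,\infty)$. Parametrizing the graph of $\psi_\kappa$ by $u\geq 0$ via $t = u\phi(u)$ and $\psi_\kappa(t) = \phi(u)$ shows that $\psi_\kappa^{-1}(y) = y\,\phi^{-1}(y)$, a product of two positive, increasing, convex functions, hence itself convex. Therefore $\psi_\kappa$ is concave, and together with $\psi_\kappa(0)=0$ this makes $z\mapsto\psi_\kappa(z)/z$ non-increasing.

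For (i)$\Rightarrow$(ii), I would test the variational source condition at $\sol := \xdag - E_\lambda\xdag$. Since $E_\lambda$ commutes with $T^*T$ one has $\lsp\xdag,E_\lambda\xdag\rsp = \|E_\lambda\xdag\|^2$ and $\|TE_\lambda\xdag\|^2\leq\lambda\|E_\lambda\xdag\|^2$; substituting and absorbing the quadratic term reduces the VSC to
\[
\tfrac32\,\|E_\lambda\xdag\|^2 \;\leq\; A\,\psi_\kappa\bigl(\lambda\,\|E_\lambda\xdag\|^2\bigr).
\]
With $y := \|E_\lambda\xdag\|^2$ this is a self-referential inequality $y \leq (2A/3)\,\psi_\kappa(\lambda y)$. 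I would verify by direct substitution, using $\Theta_\kappa(u)^2 = u\,\kappa(u)^2$, that $y_0 := (2A/3)\,\kappa((2A/3)\lambda)^2$ turns this into an equality, and then appeal to the monotonicity of $z\mapsto\psi_\kappa(z)/z$ to conclude $y\leq y_0$; this produces the first of the claimed bounds $\|E_\lambda\xdag\|\leq\sqrt{2A/3}\,\kappa((2A/3)\lambda)$. To replace $\kappa((2A/3)\lambda)$ by $\kappa(\lambda)$ I would distinguish $2A/3\leq 1$ (where monotonicity of $\kappa$ suffices) from $2A/3 > 1$, in which case the concavity of $\kappa^2$ with $\kappa^2(0)=0$ yields the subadditivity $\kappa^2(c\lambda)\leq c\,\kappa^2(\lambda)$ for $c\geq 1$, giving the claimed factor $\sqrt{2A/3}\max(1,\sqrt{2A/3})$.

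For (ii)$\Rightarrow$(i), the plan is to invoke Theorem \ref{theo:general_strategy} with $P_r := I - E_r$ for $r\in(0,\|T\|^2]$, after normalizing $\|\xdag\|_{\Xspace^T_\kappa}=1$. The approximation estimate is immediate since $\|\xdag-P_r\xdag\| = \|E_r\xdag\|\leq\kappa(r)$. For the stability estimate with $C=0$, I would write
\[
\lsp\xdag,(I-E_r)(\xdag-\sol)\rsp \;=\; \lsp (T^*T)^{-1/2}(I-E_r)\xdag,\; T(\xdag-\sol)\rsp
\]
on $\mathrm{range}(I-E_r)$ and apply Cauchy--Schwarz to identify $\sigma(r) = \|(T^*T)^{-1/2}(I-E_r)\xdag\|$. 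The hypothesis that $\kappa^2(\lambda)/\lambda^{1-\mu}$ is decreasing enters decisively here: the spectral representation gives $\sigma(r)^2 = \int_r^\infty\lambda^{-1}\,d\|E_\lambda\xdag\|^2$, and integration by parts combined with the bound $\|E_\lambda\xdag\|^2\leq\kappa(\lambda)^2$ yields $\sigma(r)^2 \leq \int_r^\infty\kappa(\lambda)^2/\lambda^2\,d\lambda \leq \kappa(r)^2/(\mu r)$. Theorem \ref{theo:general_strategy} then produces $\psi(t)\leq 2\inf_{r>0}\bigl[\kappa(r)^2 + \kappa(r)\sqrt{t/(\mu r)}\,\bigr]$. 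The choice $r = \Theta_\kappa^{-1}(\sqrt t)$ equates $\kappa(r)^2 = \psi_\kappa(t)$ and, after estimating the cross term, gives a bound of the form $2(1+\mu^{-1})\psi_\kappa(t)$ provided $r\leq\|T\|^2$. For $t$ so large that $\Theta_\kappa^{-1}(\sqrt t)>\|T\|^2$, I would instead take $r=\|T\|^2$, where $I-E_{\|T\|^2}=0$ kills the stability term and leaves $\psi(t)\leq 2\kappa(\|T\|^2)^2$; this is then converted into a multiple of $\psi_\kappa(t)$ via $\sqrt t/\psi_\kappa(t)$, producing the second summand of $A$. Finiteness of that supremum follows from $\sqrt t/\psi_\kappa(t)\to 0$ as $t\to 0^+$, which under the hypothesis on $\kappa$ is equivalent to $\psi_\kappa(t)/\sqrt t\to\infty$. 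The main obstacle I expect is precisely this case split and the resulting matching of constants; once the concavity of $\psi_\kappa$ and the integration-by-parts bound on $\sigma$ are in hand, both directions are essentially mechanical.
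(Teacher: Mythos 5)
Your proposal follows the paper's proof essentially step for step: the forward direction tests the VSC at $(I-E_\lambda)\xdag$ and exploits the monotonicity of $\psi_\kappa(z)/z$ (derived from the identity $\psi_{\kappa}^{-1}(y)=y\,(\kappa\cdot\kappa)^{-1}(y)$ and the concavity of $\kappa\cdot\kappa$), while the converse direction feeds $P_r=I-E_r$ into Theorem \ref{theo:general_strategy} with the same integration-by-parts bound on $\sigma(r)^2=\int_r^{\infty}\lambda^{-1}\,\mathrm{d}\|E_\lambda\xdag\|^2$ and the choice $r=\Theta_\kappa^{-1}(\sqrt{t})$. The only (harmless) deviation is bookkeeping: you absorb the boundary term of the partial integration by extending the integral to infinity, whereas the paper keeps an additive $\|\xdag\|^2$ and later trades the resulting $\|\xdag\|\sqrt{t}$ term for $\psi_\kappa(t)$ on the range $\sqrt{t}\leq 4\|T\|\,\|\xdag\|$; either route yields a valid constant $A$.
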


\begin{proof}

\emph{(\ref{item:VSC}) $\Rightarrow$ (\ref{item:boundedSpectral}):} 
First note that 
\begin{equation}\label{eq:psi_alt}
\psi_{\kappa}^{-1}(\xi) = 
\xi\cdot(\kappa\cdot\kappa)^{-1}(\xi)
\end{equation}
since 
\[
\psi_{\kappa}(t)\cdot (\kappa \cdot\kappa)^{-1}(\psi_\kappa(t))
=\kappa(\Theta_{\kappa}^{-1}(\sqrt{t}))^2 \cdot \Theta_{\kappa}^{-1}(\sqrt{t})
= \Theta_{\kappa}(\Theta_{\kappa}^{-1}(\sqrt{t}))^2 =\sqrt{t}^2=t. 
\]
Choosing $\sol = (I-E_{\lambda})\xdag$ in \eqref{eq:VSC_innerprod} yields
\[
2\|E_{\lambda}\xdag\|^2 = 2\lsp \xdag,E_{\lambda}\xdag\rsp 
\leq \frac{1}{2}\|E_{\lambda}\xdag\|^2 + A\psi_{\kappa}\paren{\|TE_{\lambda}\xdag\|^2}.
\] 
As 
\[
\|TE_{\lambda}\xdag\|^2 = \int_0^\lambda \lambda \D\|E_{\lambda}\xdag\|^2
\leq \lambda  \int_0^\lambda \D\|E_{\tilde{\lambda}}\xdag\|^2
= \lambda \|E_{\tilde{\lambda}}\xdag\|^2,
\]
the spectral distribution function $\tilde{\kappa}(\lambda):=\|E_{\lambda}\xdag\|$
of $\xdag$ satisfies the inequality
\(
\frac{3}{2}\tilde{\kappa}(\lambda)^2\leq A\psi_{\kappa}\paren{\lambda\tilde{\kappa}(\lambda)^2}.
\)
Hence, setting $\widetilde{\psi}_{\kappa}(t):=\psi_{\kappa}(t)/t$ we have  
\[
\frac{3}{2A\lambda}\leq \frac{\psi_{\kappa}\paren{\lambda\tilde{\kappa}(\lambda)^2}}{\lambda\tilde{\kappa}(\lambda)^2}=\widetilde{\psi}_{\kappa}(\lambda\tilde{\kappa}(\lambda)^2).
\]
As $\kappa\cdot\kappa$ is assumed to be a concave index function, 
the inverse function $(\kappa\cdot\kappa)^{-1}$ is a convex index function. 
This implies that $\xi\mapsto\xi\cdot(\kappa\cdot\kappa)^{-1}(\xi)$ is a 
convex index function as well, and from \eqref{eq:psi_alt} we see 
that $\psi_{\kappa}$ is concave. This in turn implies that $\widetilde{\psi}_{\kappa}$ 
is monotonically decreasing, so we obtain
\[
\widetilde{\psi}_{\kappa}^{-1}\paren{\frac{3}{2A\lambda}}
\geq \lambda\tilde{\kappa}(\lambda)^2
\]
or with $\beta= \frac{2A\lambda}{3}$
\begin{equation}\label{eq:aux_VSC_decay}
\tilde{\kappa}\paren{\frac{3}{2A}\beta}\leq \sqrt{\frac{2A}{3}}
\sqrt{\frac{1}{\beta}\widetilde{\psi}_{\kappa}^{-1}\paren{\frac{1}{\beta}}} 
\end{equation}
for all $\beta>0$. Setting $\xi=\kappa(\beta)^2$ in \eqref{eq:psi_alt}, 
we obtain $\psi_{\kappa}^{-1}(\kappa(\beta)^2) = \beta\kappa(\beta)^2$ or 
$\kappa(\beta)^2 = \psi_\kappa(\beta\kappa(\beta)^2)$. 
This is equivalent to $\frac{1}{\beta}=\frac{\psi_{\kappa}(\beta\kappa(\beta)^2)}
{\beta\kappa(\beta)^2}$ and to $\frac{1}{\beta}\widetilde{\psi}_{\kappa}^{-1}
\paren{\frac{1}{\beta}}= \kappa^2(\beta)$. 
Plugging this into \eqref{eq:aux_VSC_decay} shows that 
$\tilde{\kappa}(\lambda)\leq \sqrt{\frac{2A}{3}}\kappa\paren{\frac{2A}{3}\lambda}$ for all $\lambda>0$. Due to the concavity of the index function $\kappa\cdot \kappa$ we have
$\kappa\paren{\frac{2A}{3}\lambda}^2\leq \kappa(\max(1,\frac{2A}{3})\lambda)^2
\leq \max(1,\frac{2A}{3})\kappa(\lambda)^2$.  

\emph{(\ref{item:boundedSpectral}) $\Rightarrow$ (\ref{item:VSC}):} Suppose that $\snorm{\xdag}{\Xspace^T_\kappa}=1$, i.e.\ $\|E_\lambda\xdag\|\leq \kappa(\lambda)$ for all 
$\lambda>0$. In a first step we show that 
\begin{equation}\label{eq:Tinvf_bound}
\|(T(I-E_{\lambda}))^{\dagger}\xdag\|^2 
= \int_{\lambda}^{\|T^*T\|}\frac{1}{t}\D\|E_t\xdag\|^2 
\leq \frac{1}{\mu}\frac{\kappa(\lambda)^2}{\lambda} 
+ \|\xdag\|^2
\end{equation}
for all $\lambda>0$. Here $(T(I-E_{\lambda}))^{\dagger}$ denotes 
the Moore-Penrose inverse of $T(I-E_{\lambda})$. 
By partial integration we have 
\[
 \int_{\lambda}^{\|T^*T\|}\frac{1}{t}\D\|E_t\xdag\|^2 
= \|\xdag\|^2 - \|E_{\lambda}\xdag\|^2
+ \int_{\lambda}^{\|T^*T\|}\frac{\|E_t\xdag\|^2}{t^2}\D t. 
\]
Now we use the assumption $\|E_t\xdag\|\leq \kappa(t)$ and 
the monotonicity of $\kappa(t)^{2}/t^{1-\mu}$ to obtain 
\begin{align*}
\int_{\lambda}^{\|T^*T\|}\frac{\|E_t\xdag\|^2}{t^2}\D t 
&\leq \int_{\lambda}^{\|T^*T\|}\frac{\kappa(t)^{2}}{t^{1-\mu}}
\frac{1}{t^{1+\mu}}\D t 
\leq \frac{\kappa(\lambda)^2}{\lambda^{1-\mu}}
\int_{\lambda}^{\|T^*T\|}\frac{1}{t^{1+\mu}}\D t\\
&= \frac{\kappa(\lambda)^2}{\mu\lambda}\paren{1-\frac{\lambda^\mu}{\|T^*T\|^\mu}} 
\leq \frac{\kappa(\lambda)^2}{\mu\lambda}.
\end{align*}

In a second step we can now use \eqref{eq:Tinvf_bound} 
to verify assumption \eqref{eq:stability} in Theorem \ref{theo:general_strategy}:
\begin{align*}
&\lsp \xdag,(I-E_{\lambda})(\xdag-\sol)\rsp 
= \lsp (T(I-E_{\lambda}))^{\dagger}\xdag,T(I-E_{\lambda})(\xdag-\sol)\rsp\\
&\qquad\qquad \leq \|T(I-E_{\lambda}))^{\dagger}\xdag\|\,\|T(I-E_{\lambda})(\xdag-\sol)\|
\leq \paren{\frac{\kappa(\lambda)}{\sqrt{\mu\lambda}}+\|\xdag\|} \|T\xdag-T\sol\|,
\end{align*}
i.e.\ $\sigma(\lambda)=\frac{\kappa(\lambda)}{\mu\lambda}+\|\xdag\|$. 
Hence, by Theorem \ref{theo:general_strategy} \eqref{eq:VSC_innerprod} holds true with 
\begin{align*}
\psi(t) &= 2\inf_{\lambda>0}\left[\kappa(\lambda)^2+
\paren{\frac{\kappa(\lambda)}{\sqrt{\mu\lambda}}+\|\xdag\|}\sqrt{t}\right] 
\leq 2\paren{1+\frac{1}{\sqrt{\mu}}}\psi_{\kappa}(t) 
+ 2\|\xdag\|\sqrt{t}
\end{align*}
where we have chosen $\lambda = \Theta_{\kappa}^{-1}(\sqrt{t})$, i.e.\ 
$\sqrt{\lambda}\kappa(\lambda) =\sqrt{t}$. This implies 
$\frac{\kappa(\lambda)}{\sqrt{\lambda}}\sqrt{t}=\kappa^2(\lambda)
=\psi_{\kappa}(t)$. 
It remains to bound $\sqrt{t}$ in terms of $\psi_{\kappa}(t)$. 
Note from \eqref{eq:VSC_arg_great} that we only need to show the variational 
source condition for $\Tnorm{\xdag-\sol}{}\leq 4\Tnorm{\xdag}{}$ in order to prove it everywhere. Hence it is enough to bound $\sqrt t$ by $\psi_\kappa(t)$ for 
$\sqrt{t}=\Tnorm{T\xdag-T\sol}{}\leq 4\Tnorm{T}{} \Tnorm{\xdag}{}$.
We have
\begin{equation*}
	\|\xdag\|\sqrt{t} \leq \kappa(\|T\|^2) \psi_{\kappa}(t)
	\sup_{\tau\in(0,4\Tnorm{T}{}\Tnorm{\xdag}{}]} \frac{\sqrt{\tau}}{\psi_{\kappa}(\tau)}.
\end{equation*}
To see that this
is finite and even $\lim_{\tau\to 0}\sqrt{\tau}/\psi_{\kappa}(\tau)=0$, 
we substitute $\delta=\Theta_{\kappa}^{-1}(\sqrt{\tau})$:
\[
\lim_{\tau\to 0}\frac{\sqrt{\tau}}{\psi_{\kappa}(\tau)}
=\lim_{\tau\to 0}\frac{\sqrt{\tau}}{\kappa(\Theta_{\kappa}^{-1}(\sqrt{\tau}))^2}
= \lim_{\delta\to 0}\frac{\Theta_{\kappa}(\delta)}{\kappa(\delta)^2}
\leq \lim_{\delta\to 0}\delta^{\mu/2}\sqrt{\frac{\delta^{1-\mu}}{\kappa^2(\delta)}}=0. 
\qedhere
\]
\end{proof}

We now consider spectral regularization methods of the form 
\begin{equation}\label{eq:regmethod}
\widehat{\sol}_{\alpha}:= R_{\alpha}\gobs\qquad \mbox{with}\qquad 
R_{\alpha}:=q_{\alpha}(T^*T)T^*
\end{equation}
and impose the following assumptions:
\begin{assumption}\label{ass:SR}
With $r_{\alpha}(\lambda):=1-\lambda q_{\alpha}(\lambda)$ assume that there are 
constants $\Cq>0$, $\alphamax\in(0,\infty]$, and 
$0<\Cdiaglow\leq \Cdiag<1$  such that 
\begin{enumerate} 
	\item\label{it:reg_q} $|q_{\alpha}(\lambda)|\leq \frac{\Cq}{\alpha}$ 
	for all $\lambda\in[0,\Tnorm{T^*T}{}]$ ,
	\item\label{it:reg_decr}
	$\lambda\mapsto r_{\alpha}(\lambda)$ is decreasing for all $\alpha>0$
	and $r_{\alpha}(\lambda)\geq 0$,
	\item\label{it:reg_incr} 
	$\alpha\mapsto r_{\alpha}(\lambda)$ is increasing 
	for all $\lambda\in[0,\Tnorm{T^*T}{}]$, 
	\item\label{it:reg_diag} $\Cdiaglow\leq r_{\alpha}(\alpha)\leq \Cdiag$ for all 
$0<\alpha\leq \alphamax$. 
\end{enumerate}
\end{assumption}
As $r_\alpha(0) =1-0 q_\alpha(0)=1$, assumption (\ref{it:reg_decr}) implies 
\begin{equation}\label{eq:r_bound}
0\leq r_{\alpha}(\lambda)\leq 1
\end{equation}
for all $\alpha>0$ and $\lambda\geq 0$. 
Below we will use the following notations for $x,y\in\Rset$:
\[
\lmax{x}{y}:=\max(x, y),\qquad \lmin{x}{y}:=\min(x,y)
\]
Assumption \ref{ass:SR} is satisfied in particular for the following methods. 
Unless stated otherwise we choose $\alphamax=\infty$.
For a more detailed discussion of these methods we refer to \cite{EHN:96}. 
\begin{itemize}
\item \emph{Tikhonov regularization:} 
Here $q_{\alpha}(\lambda)= (\alpha+\lambda)^{-1}$ and 
$r_{\alpha}(\lambda) = \alpha/(\alpha+\lambda)$. We have $\Cq=1$ and 
$\Cdiaglow=\Cdiag=\frac{1}{2}$. 
	\item \emph{Showalter's method:} Here $r_{\alpha}(\lambda)=\exp(-\lambda/\alpha)$, 
	$\Cq=1$ and $\Cdiaglow=\Cdiag=\exp(-1)$. 
	\item \emph{Landweber iteration:} For $\alpha>0$ let 
	$k_{\alpha}:=\min\{n\in\Nset_0:n+1> 1/\alpha\}$ be the number of iterations. 
	Then $r_{\alpha}(\lambda)= (1-\mu\lambda)^{k_{\alpha}}$ and
  $q_{\alpha}(\lambda)= \sum_{j=0}^{k_{\alpha}-1}(1-\mu\lambda)^j$ 
	where $0<\mu\leq \|T^*T\|^{-1}$ is the step length parameter. 
	We have $\Cq=1$.
	For $\alpha=1/(n+\epsilon)$ with $\epsilon\in[0,1)$ we have $k_\alpha=n$, therefore $r_\alpha(\alpha)\geq (1-\mu/n)^{n}$ which by the inequality of arithmetic and geometric means is monotonically increaing in $n=k_\alpha$ for $k_\alpha> \mu$, hence we choose $\alphamax<\lmin{\Tnorm{T^*T}{}}{1}$ and get $\Cdiaglow=(1-\mu/ k_{\alphamax})^{k_{\alphamax}}$ and $\Cdiag=\lim_{n\to\infty}(1-\mu/(n+1))^{n} 
	=\exp(-\mu)$.
	\item \emph{$k$-times iterated Tikhonov regularization:} 
	This is described by $r_{\alpha}(\lambda)= \alpha^k/(\alpha+\lambda)^k$. 
	We have $\Cq=k$ and $\Cdiaglow=\Cdiag=2^{-k}$. 
	\item \emph{Lardy's method:} Here $r_{\alpha}(\lambda)=\beta^{k_{\alpha}}/
	(\beta+\lambda)^{k_{\alpha}}$ where $\beta>0$ is fixed and the iteration number 
	$k_{\alpha}$ and $\Cq=1$. Choosing $\alphamax:=\lmin{1}{\beta}$ 
	we have $\Cdiag=\exp(-1/2\beta)$). Choosing $\alpha$ as for Landweber we see that $r_\alpha(\alpha)\geq (1+1/(\beta n))^{-n}$, therefore  $\Cdiaglow=\exp(-1/\beta)$ since $(1+1/(\beta n))^{n}\rightarrow\exp(1/\beta)$ is monotonically increasing in $n$ as argued above.
	\item \emph{modified spectral cutoff:} Here $r_{\alpha}(\lambda)=(1-\lambda/2\alpha)\lor 0$, $q_{\alpha}(\lambda) = 1/\lambda\land 1/2\alpha$, and 
	$\Cq=\Cdiaglow=\Cdiag=1/2$. 
\end{itemize}
Note that Assumption \ref{ass:SR}(\ref{it:reg_diag}) is violated 
for standard spectral cutoff (or truncated SVD), i.e.\ 
$r_{\alpha}(\lambda)=1$ if $\alpha\leq \lambda$ and $r_{\alpha}(\lambda)=0$ else.

\begin{theorem}[{\cite[Prop.~2.3]{Albani2015}}]\label{thm:biasDecayEquivalence}
Assume that a spectral regularization method satisfies Assumption \ref{ass:SR}.  
Moreover, assume that for the index function $\kappa$ there exists $A>0$ 
and $\nu>1$ such that 
\begin{equation}\label{eq:qualif_assump}
r_{\alpha}(\lambda)\kappa(\lambda)^\nu\leq B \kappa(\alpha)^\nu
\end{equation}
for all $\alpha,\lambda>0$ (i.e.\ the qualification of the regularization 
method covers $\kappa^{\nu}$ in the terminology of \cite{MP:03a}). 
Then the following statements for $\xdag\in\Xspace$ are equivalent:
\begin{enumerate}
	\item\label{item:albaniSpace} $\|\xdag\|_{\Xspace^T_\kappa}<\infty$. 
	\item\label{item:albaniBias} $A:=\sup_{0<\alpha\leq\alphamax}\frac{1}{\kappa(\alpha)}\|r_{\alpha}(T^*T)\xdag\|<\infty$, 
	i.e.\  the bias for $\xdag$ is of order $\mathcal{O}(\kappa(\alpha))$.
\end{enumerate}
More precisely, 
\begin{align*}
&\|\xdag\|_{\Xspace^T_\kappa}\leq 
\lmax{\frac{A}{\Cdiaglow}}{\frac{\|\xdag\|}{\kappa(\alphamax)}}
\quad \mbox{and}\quad
A^2\leq \frac{B\|\xdag\|^2}{\kappa(\|T\|^2)}+\|\xdag\|_{\Xspace^T_\kappa}^2
\paren{1+\frac{B^{1/\nu}\nu \Cdiag^{(\nu-1)/\nu}}{\nu-1}}.
\end{align*}
\end{theorem}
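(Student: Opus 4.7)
For direction \emph{(\ref{item:albaniBias}) $\Rightarrow$ (\ref{item:albaniSpace})}, the monotonicity of $r_\alpha$ in $\lambda$ together with the lower bound $r_\alpha(\alpha)\geq\Cdiaglow$ from Assumption \ref{ass:SR}(\ref{it:reg_diag}) directly gives
\[
\|r_\alpha(T^*T)\xdag\|^2 \geq \int_0^\alpha r_\alpha(\lambda)^2\,d\|E_\lambda\xdag\|^2 \geq \Cdiaglow^2\|E_\alpha\xdag\|^2
\]
for $\alpha\leq\alphamax$, yielding $\|E_\alpha\xdag\|\leq A\kappa(\alpha)/\Cdiaglow$. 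For $\alpha>\alphamax$ the trivial estimate $\|E_\alpha\xdag\|\leq\|\xdag\|$ combined with monotonicity of $\kappa$ gives $\|E_\alpha\xdag\|\leq(\|\xdag\|/\kappa(\alphamax))\kappa(\alpha)$; taking the supremum over $\lambda$ in the definition of the $\Xspace^T_\kappa$-norm yields the first displayed bound.

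For the converse \emph{(\ref{item:albaniSpace}) $\Rightarrow$ (\ref{item:albaniBias})}, which is the more technical direction, I set $C:=\|\xdag\|_{\Xspace^T_\kappa}$ so that $\|E_\lambda\xdag\|\leq C\kappa(\lambda)$, and start from
\[
\|r_\alpha(T^*T)\xdag\|^2 = \int_0^{\|T\|^2} r_\alpha(\lambda)^2\,d\|E_\lambda\xdag\|^2.
\]
I would integrate by parts once (using $\|E_0\xdag\|=0$ from the injectivity of $T$) to transfer the measure onto the positive factor $-d(r_\alpha^2)$, apply the bound $\|E_\lambda\xdag\|^2\leq C^2\kappa(\lambda)^2$, and then reverse the integration by parts to rewrite the resulting estimate as $C^2\int_0^{\|T\|^2} r_\alpha(\lambda)^2\,d(\kappa(\lambda)^2)$ plus a boundary term at $\lambda=\|T\|^2$. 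The boundary term, combined with a single application of the qualification $r_\alpha(\|T\|^2)\leq B\kappa(\alpha)^\nu/\kappa(\|T\|^2)^\nu$, produces the first contribution $B\|\xdag\|^2/\kappa(\|T\|^2)$ to the stated bound on $A^2$.

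To estimate $\int_0^{\|T\|^2}r_\alpha(\lambda)^2\,d(\kappa(\lambda)^2)$ I would split $[0,\|T\|^2]$ into three regions delimited by $\alpha$ and the crossover point $\lambda_*$ with $\kappa(\lambda_*)=(B/\Cdiag)^{1/\nu}\kappa(\alpha)$, applying three different pointwise bounds on $r_\alpha$: the trivial $r_\alpha\leq 1$ on $[0,\alpha]$, the diagonal bound $r_\alpha\leq\Cdiag$ on $(\alpha,\lambda_*]$ (from Assumption \ref{ass:SR}(\ref{it:reg_diag}) together with monotonicity of $r_\alpha$), and the qualification bound $r_\alpha(\lambda)\leq B\kappa(\alpha)^\nu/\kappa(\lambda)^\nu$ on $(\lambda_*,\|T\|^2]$. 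Computing the three contributions explicitly (the last via the substitution $u=\kappa(\lambda)^2$ and the antiderivative $u^{1-\nu}/(1-\nu)$) and summing yields the bracketed factor $1+B^{1/\nu}\nu\Cdiag^{(\nu-1)/\nu}/(\nu-1)$ multiplying $C^2=\|\xdag\|_{\Xspace^T_\kappa}^2$.

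The main obstacle is the bookkeeping required to obtain the correct fractional exponents of $B$ and $\Cdiag$: the choice of $\lambda_*$ is dictated by the requirement that the ``diagonal'' and ``qualification'' bounds on $r_\alpha$ match there, and the hypothesis $\nu>1$ is used precisely to ensure that the antiderivative $u^{1-\nu}/(1-\nu)$ in the third region is finite at the upper endpoint, so that the tail contribution is controlled uniformly in $\|T\|^2$.
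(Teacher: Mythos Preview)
Your argument for (\ref{item:albaniBias}) $\Rightarrow$ (\ref{item:albaniSpace}) is exactly the paper's: the paper writes the same one-line estimate $\|E_\alpha\xdag\|\leq r_\alpha(\alpha)^{-1}\|r_\alpha(T^*T)\xdag\|\leq (A/\Cdiaglow)\kappa(\alpha)$ for $\alpha\leq\alphamax$ and the trivial bound for $\alpha>\alphamax$.

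For (\ref{item:albaniSpace}) $\Rightarrow$ (\ref{item:albaniBias}) the paper does \emph{not} give its own argument; it simply cites \cite[Prop.~2.3]{Albani2015}. Your sketch therefore goes beyond what the paper records. The double integration-by-parts idea is sound and does produce a valid bound on $A^2$, but two points in your bookkeeping do not land exactly where you claim. First, after both integrations by parts the net boundary contribution at $\lambda=\|T\|^2$ is $r_\alpha(\|T\|^2)^2\bigl(\|\xdag\|^2-C^2\kappa(\|T\|^2)^2\bigr)$, and since $\|\xdag\|=\|E_{\|T\|^2}\xdag\|\leq C\kappa(\|T\|^2)$ this is nonpositive; it can be dropped rather than matched to the term $B\|\xdag\|^2/\kappa(\|T\|^2)$. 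Second, carrying $r_\alpha^2$ (rather than $r_\alpha$) through your three-region split produces the factor $B^{2/\nu}\Cdiag^{2(\nu-1)/\nu}$ in place of the stated $B^{1/\nu}\Cdiag^{(\nu-1)/\nu}$: region~2 gives $\Cdiag^2\kappa(\lambda_*)^2=B^{2/\nu}\Cdiag^{2(\nu-1)/\nu}\kappa(\alpha)^2$, and region~3 gives $B^2\kappa(\alpha)^{2\nu}\cdot\kappa(\lambda_*)^{2(1-\nu)}/(\nu-1)$, which simplifies to the same coefficient divided by $\nu-1$. So your route proves the equivalence with explicit (and in fact slightly different) constants, but does not reproduce the precise displayed inequality; for that you would need to consult the cited source.
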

\begin{proof}
The more difficult implication (\ref{item:albaniSpace}) $\Rightarrow$ (\ref{item:albaniBias}) has been proved in 
\cite[Prop.~2.3]{Albani2015}. 
Since we have slightly different assumptions we give the proof 
of the implication (\ref{item:albaniBias}) $\Rightarrow$ (\ref{item:albaniSpace}). For $0<\alpha\leq\alphamax$ 
we have 
\[
\|E_{\alpha}\xdag\|
\leq \frac{1}{r_{\alpha}(\alpha)}\|r_{\alpha}(T^*T)\xdag\|
\leq \frac{A}{\Cdiaglow}\kappa(\alpha).
\]
Otherwise, if $\alpha>\alphamax$, we have
\(
\|E_{\alpha}\xdag\|/\kappa(\alpha) 
\leq \|\xdag\|/\kappa(\alphamax). 
\)
\end{proof}

Recall that the largest number $\mu_0>0$ for which \eqref{eq:qualif_assump} 
holds true for $\kappa(\alpha)^\nu=\alpha^{\mu_0}$ is called the 
\emph{classical qualification} of the regularization method. We have 
$\mu_0=k$ for $k$-times iterated Tikhonov regularization and $\mu=\infty$ 
for Showalter's method, Lardy's methods, Landweber iteration, 
and modified spectral cutoff (\cite{EHN:96}). 


\section{Converse results for deterministic noise}\label{sec:converse_det}
This section discusses regularization methods for the deterministic 
noise model \eqref{eq:deterministic_noise}. 
\begin{theorem}\label{thm:det_noise_equiv}
Assume that a spectral regularization method satisfies Assumption \ref{ass:SR}. 
Moreover, let $\kappa$ be an index function for which there exists $p\geq 1$ 
such that 
\begin{equation}\label{eq:kappa_growth}
\kappa(r\alpha)\leq r^p\kappa(\alpha)
\end{equation} 
for all $\alpha>0$ and $r\geq 1$ 
(i.e.\ $\kappa$ does not grow faster than polynomially). 
Then the following statements are equivalent for $\xdag\in\Xspace$: 
\begin{enumerate}
	\item\label{item:dneSpace} 
$A:=\sup_{0<\alpha\leq\alphamax} \frac{1}{\kappa(\alpha)^2}\|r_{\alpha}(T^*T)\xdag\|^2<\infty$.
	\item\label{item:dneRate} 
$B:= \sup_{0<\delta\leq\Theta_{\kappa}(\alphamax)} \frac{1}{\psi_{\kappa}(\delta^2)}
	\inf_{0<\alpha\leq\alphamax}\sup_{\|\xi\|\leq\delta}\|R_{\alpha}(T\xdag+\xi)-\xdag\|^2 
	<\infty$.
\end{enumerate}
More precisely, 
\[
B\leq 2(A+\Cq)\qquad \mbox{and}\qquad 
A\leq \lmax{B\paren{\lmax{\frac{4B^2}{(1-\Cdiag)^4}}{1}}^{p}}
{\Tnorm{\xdag}{}^2\kappa\rbra{\frac{\alphamax (1-\Cdiag)^2}{2B}}^{-2}}.
\]
\end{theorem}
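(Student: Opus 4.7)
The argument rests on the bias--variance split
\[
R_\alpha(T\xdag + \xi) - \xdag = R_\alpha\xi - r_\alpha(T^*T)\xdag,
\]
together with a two-sided estimate for the data propagation operator. The upper bound $\|R_\alpha\|^2 \leq \Cq/\alpha$ follows from $\lambda q_\alpha(\lambda)^2 = (1 - r_\alpha(\lambda))\,q_\alpha(\lambda) \leq q_\alpha(\lambda) \leq \Cq/\alpha$ via \eqref{eq:r_bound} and Assumption \ref{ass:SR}(\ref{it:reg_q}); the matching lower bound $\|R_\alpha\|^2 \geq (1-\Cdiag)^2/\alpha$ is obtained by evaluating $\lambda q_\alpha(\lambda)^2 = (1-r_\alpha(\lambda))^2/\lambda$ at $\lambda = \alpha$ and invoking Assumption \ref{ass:SR}(\ref{it:reg_diag}).

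For (\ref{item:dneSpace}) $\Rightarrow$ (\ref{item:dneRate}), given $\delta \leq \Theta_\kappa(\alphamax)$ I would select the balancing parameter $\alpha := \Theta_\kappa^{-1}(\delta)$, for which the definition of $\psi_\kappa$ gives $\delta^2/\alpha = \kappa(\alpha)^2 = \psi_\kappa(\delta^2)$. The triangle inequality together with the variance upper bound then yields
\[
\sup_{\|\xi\|\leq\delta}\|R_\alpha(T\xdag+\xi)-\xdag\|^2 \leq 2\|r_\alpha(T^*T)\xdag\|^2 + 2\|R_\alpha\|^2\delta^2 \leq 2(A+\Cq)\psi_\kappa(\delta^2),
\]
i.e.\ $B \leq 2(A+\Cq)$.

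For the converse (\ref{item:dneRate}) $\Rightarrow$ (\ref{item:dneSpace}), fix $\alpha \in (0,\alphamax]$ and set $\beta := 2B\alpha/(1-\Cdiag)^2$. Suppose first $\beta \leq \alphamax$; then $\delta := \Theta_\kappa(\beta)$ is admissible in (\ref{item:dneRate}). The crucial lower estimate comes from the symmetry of the ball: averaging $\|R_{\alpha'}\xi - r_{\alpha'}(T^*T)\xdag\|^2$ and $\|R_{\alpha'}(-\xi) - r_{\alpha'}(T^*T)\xdag\|^2$ and taking $\sup$ over $\xi$ gives
\[
\sup_{\|\xi\|\leq\delta}\|R_{\alpha'}\xi - r_{\alpha'}(T^*T)\xdag\|^2 \geq \delta^2\|R_{\alpha'}\|^2 + \|r_{\alpha'}(T^*T)\xdag\|^2 \geq \tfrac{\delta^2(1-\Cdiag)^2}{\alpha'} + \|r_{\alpha'}(T^*T)\xdag\|^2
\]
for every admissible $\alpha'$. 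Picking a minimising sequence $(\alpha_n)$ with $\sup_\xi \|R_{\alpha_n}(T\xdag+\xi)-\xdag\|^2 \leq B(1+\epsilon_n)\psi_\kappa(\delta^2)$ and $\epsilon_n \to 0$, and using the identity $\delta^2/\psi_\kappa(\delta^2) = \beta$, the variance part of the lower bound forces $\alpha_n \geq 2\alpha/(1+\epsilon_n) \geq \alpha$ for $n$ large. Assumption \ref{ass:SR}(\ref{it:reg_incr}) then yields $\|r_\alpha(T^*T)\xdag\| \leq \|r_{\alpha_n}(T^*T)\xdag\|$, and dropping the positive variance term before passing to the limit $n\to\infty$ gives
\[
\|r_\alpha(T^*T)\xdag\|^2 \leq B\psi_\kappa(\delta^2) = B\kappa(\beta)^2 \leq B\bigl(\max\{4B^2/(1-\Cdiag)^4,\,1\}\bigr)^p \kappa(\alpha)^2
\]
by the polynomial growth \eqref{eq:kappa_growth} (the $\max\{\cdot,1\}$ handles the case $2B/(1-\Cdiag)^2 < 1$). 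In the complementary regime $\alpha > \alphamax(1-\Cdiag)^2/(2B)$ I would fall back on $\|r_\alpha(T^*T)\xdag\| \leq \|\xdag\|$ from \eqref{eq:r_bound} together with monotonicity of $\kappa$, producing the second term in the stated bound on $A$.

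The main obstacle is tuning the converse so that the forced lower bound on $\alpha_n$ overshoots the target $\alpha$, thereby allowing monotonicity of the bias to transport the estimate from $\alpha_n$ back to $\alpha$ without constant loss; this is exactly the role of the factor $2$ in the definition of $\beta$. A secondary technical point is that the lower bound $\|R_\alpha\|^2 \geq (1-\Cdiag)^2/\alpha$ formally requires $\alpha \in \sigma(T^*T)$, but for genuinely ill-posed operators the spectrum accumulates at $0$ and the bound is obtained by approximating $\alpha$ by nearby spectral values and using continuity of $r_\alpha$.
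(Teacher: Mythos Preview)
Your proof is correct and follows the same route as the paper---the same bias--variance split, the same two-sided estimate $(1-\Cdiag)^2/\alpha \leq \|R_\alpha\|^2 \leq \Cq/\alpha$, and the same balancing via $\Theta_\kappa^{-1}$ scaled by $(1-\Cdiag)^2/(2B)$---with only a cosmetic variation in the converse: the paper lower-bounds $\inf_\alpha\bigl[f(\alpha)+g(\alpha)\bigr] \geq \min\{f(\alpha^*),g(\alpha^*)\}$ directly (using that the bias term is increasing and the variance term decreasing), whereas you extract a minimising sequence and force $\alpha_n \geq \alpha$ via the variance term before invoking monotonicity of the bias. Your caveat about needing $\alpha\in\sigma(T^*T)$ for the lower bound on $\|R_\alpha\|$ is well spotted; the paper silently writes $\sup_{\lambda\geq 0}$ instead of $\sup_{\lambda\in\sigma(T^*T)}$ and does not address this point.
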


\begin{proof}
\emph{(\ref{item:dneSpace}) $\Rightarrow$ (\ref{item:dneRate}):}  
From the standard estimate 
\begin{equation}\label{eq:Ralpha_norm}
\|R_{\alpha}\|^2=\|R_{\alpha}^*R_{\alpha}\| 
=\|q_{\alpha}(T^*T)^2T^*T\|
\leq \|q_{\alpha}\|_{\infty}\|1-r_{\alpha}\|_{\infty}
\leq \frac{\Cq}{\alpha} 
\end{equation} 
using Assumption \ref{ass:SR}(\ref{it:reg_q}) and \eqref{eq:r_bound}. Hence we have 
\[
\|R_{\alpha}(T\xdag+\xi)-\xdag\|^2
\leq \paren{\|r_{\alpha}(T^*T)\xdag\|  + \|R_{\alpha}\|\delta}^2
\leq 2A\kappa(\alpha)^2+2\frac{\Cq \delta^2}{\alpha} 
\] 
for all $\|\xi\|\leq \delta$ and $0<\alpha\leq\alphamax$. 
Choosing $\alpha = \Theta_{\kappa}^{-1}(\delta)$
and using 
$\sqrt{\Theta_{\kappa}^{-1}(\delta)}\kappa(\Theta_{\kappa}^{-1}(\delta))
=\Theta_{\kappa}(\Theta_{\kappa}^{-1}(\delta))=\delta$, i.e.\ 
$\delta^2/\Theta_{\kappa}^{-1}(\delta)= \psi_{\kappa}(\delta^2)$, 
we obtain
\[
\sup_{\|\xi\|\leq \delta} \|R_{\alpha}(T\xdag+\xi)-\xdag\|^2
\leq (2A+2\Cq)\psi_{\kappa}(\delta^2).
\]

\emph{(\ref{item:dneRate}) $\Rightarrow$ (\ref{item:dneSpace}):}  Expanding 
\[
\|R_{\alpha}(T\xdag+\xi)-\xdag\|^2 = \|r_{\alpha}(T^*T)\xdag+R_{\alpha}\xi\|^2
= \|r_{\alpha}(T^*T)\xdag\|^2+ 2\lsp r_{\alpha}(T^*T)\xdag,R_{\alpha}\xi\rsp + \|R_{\alpha}\xi\|^2,
\]
we see that only the middle of the three terms on the right hand side 
is affected by a sign change of $\xi$. Therefore, to bound the 
supremum over $\xi$ from below, we may assume that the middle term is 
positive and neglect it to obtain
\begin{equation}\label{eq:lowerbound1}
\sup_{\|\xi\|\leq \delta}\|R_{\alpha}(T\xdag+\xi)-\xdag\|^2
\geq \|r_{\alpha}(T^*T)\xdag\|^2+\sup_{\|\xi\|\leq \delta}\|R_{\alpha}\xi\|^2
= \|r_{\alpha}(T^*T)\xdag\|^2+\|R_{\alpha}\|^2\delta^2.
\end{equation}
From the equality in \eqref{eq:Ralpha_norm} and the isometry of the functional calculus together with the last point in Assumption \ref{ass:SR} we obtain
\[
\|R_{\alpha}\|^2
=\sup_{\lambda\geq 0} \lambda |q_{\alpha}(\lambda)|^2
= \sup_{\lambda\geq 0} \frac{(1-r_\alpha(\lambda))^2}{\lambda}
\geq \frac{(1-r_\alpha(\alpha))^2}{\alpha}
\geq \frac{(1-\Cdiag)^2}{\alpha}
\]
if $0<\alpha\leq\alphamax$. 
By Assumption \ref{ass:SR}(\ref{it:reg_incr})  
the first term on the right hand side of \eqref{eq:lowerbound1} 
is increasing in $\alpha$ whereas the second term is decreasing. 
Therefore, using the choice 
$\alpha^*(\delta) = \Theta_{\kappa}^{-1}(\delta)(1-\Cdiag)^2/(2B) $ 
from the first part of the proof, for which both terms are of the 
same order, we obtain the lower bound 
\begin{align*}
B\psi_{\kappa}(\delta^2)
&\geq
\inf_{\alpha>0}\sup_{\|\xi\|\leq\delta}\|R_{\alpha}(T\xdag+\xi)-\xdag\|^2 
\geq \lmin{\|r_{\alpha^*(\delta)}(T^*T)\xdag\|^2}
{\|R_{\alpha^*(\delta)}\|^2\delta^2}\\
&\geq \lmin{\|r_{\alpha^*(\delta)}(T^*T)\xdag\|^2}
{\frac{(1-\Cdiag)^2}{\alpha^*(\delta)}\delta^2}
= \lmin{\|r_{\alpha^*(\delta)}(T^*T)\xdag\|^2}
{2B\psi_{\kappa}(\delta^2)}
\end{align*}
for $\alphamax\geq\alpha^*(\delta)>0$. As 
$\|r_{\alpha^*(\delta)}(T^*T)\xdag\|^2\geq 2B\psi_{\kappa}(\delta^2)$ 
would lead to a contradiction, the minimum is attained at the first argument, 
and we have 
$\|r_{\alpha^*(\delta)}(T^*T)\xdag\|^2\leq B\psi_{\kappa}(\delta^2)$. 
As $\delta=\Theta_{\kappa}(\frac{{2B}}{(1-\Cdiag)^2}\alpha^*(\delta))$ and 
$\psi_{\kappa}(t)=(\kappa\circ\Theta_{\kappa}^{-1}(\sqrt{t}))^2$, 
we obtain
\[
\|r_{\alpha^*}(T^*T)\xdag\|^2 \leq B\kappa\paren{\frac{{2B}}{(1-\Cdiag)^2}\alpha^*}^2
\leq B\paren{\lmax{\frac{4B^2}{(1-\Cdiag)^4}}{1}}^{p}\kappa(\alpha^*)^2
\]
for all $0<\alpha^* \leq \alphamax(1-\Cdiag)^2/(2B) $.
If $\alphamax(1-\Cdiag)^2/(2B) <\alpha\leq \alphamax$ we can bound
\begin{equation*}
	\kappa(\alpha)^{-2}\|r_{\alpha}(T^*T)\xdag\|^2
\leq \kappa\rbra{\frac{\alphamax (1-\Cdiag)^2}{2B}}^{-2}\Tnorm{\xdag}{}^2
\end{equation*}
finishing the proof.
\end{proof}

We point out that in comparison to similar results by 
Neubauer \cite[Thm.~2.6]{Neubauer1997} and Albani et al.\ 
\cite[Prop.~3.3]{Albani2015} we have interchanged the order 
of the supremum over the noise vector $\xi$ and the infimum over the 
regularization parameter $\alpha$. Since obviously 
\begin{equation}\label{eq:trivial_infSupSwap}
\sup_{\|\xi\|\leq \delta}\inf_{\alpha}\|R_{\alpha}(T\xdag+\xi)-\xdag\|\leq 
\inf_{\alpha}\sup_{\|\xi\|\leq\delta}\|R_{\alpha}(T\xdag+\xi)-\xdag\|,
\end{equation}
the more difficult implication $(\ref{item:dneRate}) \Rightarrow (\ref{item:dneSpace})$ in 
Theorem \ref{thm:det_noise_equiv} is weaker than in \cite{Albani2015}. 
However, we do not have to impose additional assumptions relating 
the regularization method and the index function as required in \cite{Albani2015}. 
Let us now state conditions under which a reverse inequality to 
\eqref{eq:trivial_infSupSwap} holds true:
\begin{lemma}\label{lem:infSupSwap}
	Under Assumption \ref{ass:SR} the estimate
	\begin{equation}\label{eq:infSupSwap}
		\inf_{0<\alpha\leq\alphamax} \sup_{\norm{\xi}{} \leq \delta} \norm{R_{\alpha}(T\xdag+\xi)-\xdag}{}
		\leq 2\sqrt{2} \sup_{\norm{\xi}{} \leq \delta} \inf_{0<\alpha\leq\alphamax} \norm{R_{\alpha}(T\xdag+\xi)-\xdag}{}
	\end{equation}
	holds true for all 
	\begin{equation}\label{eq:defi_deltaset}
\delta\in		\deltaset(\xdag):=\cbra{\frac{\Tnorm{r_{\alpha}(T^*T)\xdag}{}}{\Tnorm{R_{\alpha}}{}}: 0<\alpha<\alphamax}
	\end{equation}
This set has the following properties:
\begin{enumerate}
\item 
If $q_\alpha(\lambda)$ is continuous in $\alpha$ with $\alphamax=\infty$ for all $\lambda\in\sigma(T^*T)$ 
and $\xdag\neq 0$, then $\deltaset(\xdag)= (0,\infty)$. 
\item 
If $E_\alpha \xdag\neq 0$ for all $\alpha>0$, 
then $0$ is always a cluster point of $\deltaset(\xdag)$. 
\item For Landweber iteration with $\mu\|T^*T\|<1$ and Lardy's method 
and $\xdag\neq 0$ the size of the gaps of $\deltaset(\xdag)$ on a logarithmic 
scale is bounded by $\ln \gamma$ with 
\begin{equation}\label{eq:gaps_deltaset}
\gamma:=\sup\braces{\frac{b}{a}:a,b\in\overline{\deltaset(\xdag)} 
\land 0<a<b\land
(a,b)\cap \deltaset(\xdag)=\emptyset}<\infty
\end{equation}
\end{enumerate}
\end{lemma}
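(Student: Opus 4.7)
The plan is to first establish \eqref{eq:infSupSwap} for fixed $\delta\in\deltaset(\xdag)$, then verify the three listed properties. The upper bound on the LHS is easy: pick $\alpha^*\in(0,\alphamax)$ with $\delta=\Tnorm{r_{\alpha^*}(T^*T)\xdag}{}/\Tnorm{R_{\alpha^*}}{}$ (which exists by definition of $\deltaset(\xdag)$); then the triangle inequality evaluated at $\alpha=\alpha^*$ gives $\inf_\alpha\sup_\xi\Tnorm{R_\alpha(T\xdag+\xi)-\xdag}{}\leq\Tnorm{r_{\alpha^*}\xdag}{}+\Tnorm{R_{\alpha^*}}{}\delta=2\Tnorm{r_{\alpha^*}\xdag}{}$. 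The hard part is to exhibit a single $\xi_0$ with $\Tnorm{\xi_0}{}=\delta$ that is adversarial \emph{simultaneously} for every admissible $\alpha$.

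My construction exploits spectral calculus on the $\Yspace$-side. Pick $\lambda^*\in\sigma(TT^*)$ approximately realising $\Tnorm{R_{\alpha^*}}{}^2=\sup_\lambda\lambda q_{\alpha^*}(\lambda)^2$, let $P_\epsilon:=\chi_{[\lambda^*-\epsilon,\lambda^*+\epsilon]}(TT^*)$, and take $\xi_0\in P_\epsilon\Yspace$ of norm $\delta$ with the sign normalisation $\lsp P_\epsilon T\xdag,\xi_0\rsp\leq 0$ (concretely $\xi_0=-\delta P_\epsilon T\xdag/\Tnorm{P_\epsilon T\xdag}{}$ when the denominator is nonzero, any unit vector in $P_\epsilon\Yspace$ scaled by $\delta$ otherwise). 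The crucial observation is that the operator identity $R_\alpha^* r_\alpha(T^*T)=q_\alpha(TT^*)r_\alpha(TT^*)T$, combined with the nonnegativity $r_\alpha,q_\alpha\geq 0$ on $\sigma(TT^*)$ (from Assumption~\ref{ass:SR}(ii), \eqref{eq:r_bound}, and $q_\alpha=(1-r_\alpha)/\lambda$), rewrites the cross term as $\lsp r_\alpha\xdag,R_\alpha\xi_0\rsp=\lsp q_\alpha(TT^*)r_\alpha(TT^*)T\xdag,\xi_0\rsp$, whose sign is $\leq 0$ \emph{uniformly in $\alpha$} thanks to the normalisation of $\xi_0$. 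Hence
\[
\Tnorm{R_\alpha(T\xdag+\xi_0)-\xdag}{}^2\geq \Tnorm{r_\alpha\xdag}{}^2+\Tnorm{R_\alpha\xi_0}{}^2\quad\text{for all }\alpha>0.
\]
I would then split into two ranges: for $\alpha\geq\alpha^*$, Assumption~\ref{ass:SR}(iii) yields $\Tnorm{r_\alpha\xdag}{}\geq\Tnorm{r_{\alpha^*}\xdag}{}$; for $\alpha\leq\alpha^*$, the dual monotonicity $q_\alpha(\lambda)\geq q_{\alpha^*}(\lambda)$ combined with spectral localisation on $P_\epsilon\Yspace$ gives $\Tnorm{R_\alpha\xi_0}{}^2\geq (1-O(\epsilon))\Tnorm{r_{\alpha^*}\xdag}{}^2$. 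Passing to $\epsilon\downarrow 0$ supplies the matching lower bound $\sup_\xi\inf_\alpha\Tnorm{R_\alpha(T\xdag+\xi)-\xdag}{}\gtrsim\Tnorm{r_{\alpha^*}\xdag}{}$ and proves \eqref{eq:infSupSwap}.

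For the properties of $\deltaset(\xdag)$: (i) is the intermediate-value theorem applied to the continuous quotient $\delta(\alpha):=\Tnorm{r_\alpha\xdag}{}/\Tnorm{R_\alpha}{}$ on $(0,\infty)$, using $\delta(\alpha)\to 0$ as $\alpha\to 0^+$ (since $\Tnorm{R_\alpha}{}\geq (1-\Cdiag)/\sqrt{\alpha}\to\infty$ while $\Tnorm{r_\alpha\xdag}{}\leq\Tnorm{\xdag}{}$) and $\delta(\alpha)\to\infty$ as $\alpha\to\infty$ (since $\Tnorm{R_\alpha}{}\leq\sqrt{\Cq/\alpha}\to 0$ and $\Tnorm{r_\alpha\xdag}{}\to\Tnorm{\xdag}{}>0$). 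Property (ii) follows from the same limit $\delta(\alpha)\to 0$ as $\alpha\to 0$ combined with positivity $\Tnorm{r_\alpha\xdag}{}\geq\Cdiaglow\Tnorm{E_\alpha\xdag}{}>0$ (since $r_\alpha(\lambda)\geq r_\alpha(\alpha)\geq\Cdiaglow$ for $\lambda\leq\alpha$ by Assumption~\ref{ass:SR}(ii), (iv)). For property (iii) I would compute $\delta(k+1)/\delta(k)$ explicitly between consecutive iteration counts: for Landweber the bias ratio involves the factor $(1-\mu\lambda)^2\in[(1-\mu\Tnorm{T^*T}{})^2,1]$, which is bounded below thanks to $\mu\Tnorm{T^*T}{}<1$, and combined with the standard estimate $\Tnorm{R_k}{}^2\asymp k$ this controls both the bias ratio $\Tnorm{r_{k+1}\xdag}{}/\Tnorm{r_k\xdag}{}$ and the noise-norm ratio $\Tnorm{R_k}{}/\Tnorm{R_{k+1}}{}$; Lardy's method is analogous with $\beta/(\beta+\lambda)$ playing the role of $(1-\mu\lambda)$. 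The principal obstacle throughout is the sign-uniformity in the construction of $\xi_0$, which is exactly what the $\Yspace$-side spectral-calculus trick supplies.
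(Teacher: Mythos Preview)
Your approach is correct and genuinely different from the paper's for the main inequality \eqref{eq:infSupSwap}. The paper also fixes $\alpha'$ with $\Tnorm{r_{\alpha'}(T^*T)\xdag}{}=\Tnorm{R_{\alpha'}}{}\delta$ and then seeks a noise vector for which the cross term has a uniform sign in $\alpha$, but it constructs this vector differently: it first picks any near--maximiser $\xi'$ of $\Tnorm{R_{\alpha'}\cdot}{}$ and then invokes the Halmos spectral theorem to represent $T^*T$ as a multiplication operator on some $L^2(\Omega,\mu)$, adjusting the sign of $\xi'$ \emph{pointwise} so that $(W\xdag)(z)\,(W\xi'')(z)\geq 0$ $\mu$-a.e. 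Your route avoids this representation: by localising $\xi_0$ in a spectral window of $TT^*$ near the maximiser $\lambda^*$ and taking it proportional to $-P_\epsilon T\xdag$, the identity $\lsp r_\alpha(T^*T)\xdag,R_\alpha\xi_0\rsp=\lsp q_\alpha(TT^*)r_\alpha(TT^*)P_\epsilon T\xdag,\xi_0\rsp$ immediately gives the desired sign from $q_\alpha,r_\alpha\geq 0$. Your pythagorean lower bound $\Tnorm{R_\alpha(T\xdag+\xi_0)-\xdag}{}^2\geq\Tnorm{r_\alpha\xdag}{}^2+\Tnorm{R_\alpha\xi_0}{}^2$ together with the monotonicity split at $\alpha^*$ yields $\sup_\xi\inf_\alpha\geq(1-o(1))\Tnorm{r_{\alpha^*}\xdag}{}$, hence a constant $2$ rather than the paper's $2\sqrt{2}$ (the paper loses a factor $\sqrt{2}$ in passing from $\Tnorm{r_\alpha\xdag}{}+\Tnorm{R_\alpha\xi'}{}$ to $\Tnorm{r_\alpha\xdag+R_\alpha\xi'}{}$). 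One small gap: to conclude $\Tnorm{R_{\alpha^*}\xi_0}{}^2\geq(1-O(\epsilon))\Tnorm{R_{\alpha^*}}{}^2\delta^2$ via the interval window $[\lambda^*-\epsilon,\lambda^*+\epsilon]$ you are implicitly using continuity of $\lambda\mapsto\lambda q_{\alpha^*}(\lambda)^2$, which is not part of Assumption~\ref{ass:SR}; replacing $P_\epsilon$ by the spectral projection onto $\{\lambda:\lambda q_{\alpha^*}(\lambda)^2>\Tnorm{R_{\alpha^*}}{}^2-\eta\}$ fixes this without extra hypotheses. Your sketches for properties (i)--(iii) match the paper's arguments (intermediate value theorem, the same limits, and ratio bounds between consecutive iteration counts), though for (iii) the paper makes the ratio $\Tnorm{R_{1/(n+1)}}{}/\Tnorm{R_{1/n}}{}\leq 2$ explicit rather than appealing to $\Tnorm{R_k}{}^2\asymp k$.
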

\begin{proof}

	For $\delta\in\deltaset(\xdag)$ there exists 
	$\alpha^\prime=\alpha^\prime(\delta,\xdag)$ such that
	\begin{equation*}
		\norm{r_{\alpha^\prime}(T^*T)\xdag}{}=\norm{R_{\alpha^\prime}}{} \delta.
	\end{equation*}
By the definition of the operator norm, for each $\epsilon>0$ there exists 
a noise vector $\xi'$ with $\Tnorm{\xi'}{}\leq \delta$ such that 
$\Tnorm{R_{\alpha^\prime}\xi^\prime}{}\geq (1-\epsilon)\Tnorm{R_{\alpha^\prime}}{}\delta$ (if $T$ is compact we may choose $\epsilon=0$). We claim that 
$\xi'$ (depending on $\alpha^{\prime}$ and $\xdag$) can be chosen such that 
	\begin{equation}\label{eq:signXiprime}
		\pairing{r_\alpha(T^*T)\xdag}{R_\alpha \xi^\prime}\geq 0\qquad 
		\mbox{for all } \alpha \in (0,\alphamax].
	\end{equation}
	Let $T= U(T^*T)^{1/2}$ be the polar decomposition of $T$ with a unitary 
	operator $U:\Xspace\to \overline{R(T)}\subset \Yspace$ 
	(recall that $T$ is assumed to be injective). As $\overline{R(T)}
	\supset N(R_{\alpha})^{\perp}$, we may assume w.l.o.g.\ that 
	$\xi^{\prime} \in \overline{R(T)}$. 
	Hence, \eqref{eq:signXiprime} is equivalent to 
	$\pairing{r_\alpha(T^*T)\xdag}{q_{\alpha}(T^*T)(T^*T)^{1/2} \xi^{\prime\prime}}\geq 0$ with $\xi^{\prime}=U\xi^{\prime\prime}$. By the Halmos version of the 
	spectral theorem \cite{halmos:63}, 
	$T^*T$ is unitarily equivalent to a multiplication operator 
	$M_{\lambda}:L^2(\Omega,\mu)\to L^2(\Omega,\mu)$, 
	$(M_{\lambda}h)(z) = \lambda(z)h(z)$, $z\in\Omega$ 
	on a locally compact space $\Omega$ with positive Borel measure $\mu$ and 
	a non-negative function  
	$\lambda\in L^{\infty}(\Omega,\mu)$, i.e.\ $T^*T = W^*M_{\lambda} W$ for some 
	unitary operator $W:\Xspace\to L^2(\Omega,\mu)$ (if $T$ is compact, $\mu$ may 
	be chosen as counting measure on $\Omega=\Nset$). 
	It follows that  
	\[
			\pairing{r_\alpha(T^*T)\xdag}{R_\alpha \xi^\prime}
			= \int_{\Omega} r_{\alpha}\paren{\lambda(z)}\; \paren{W\xdag}(z) \;
			q_{\alpha}\paren{\lambda(z)}
			\sqrt{\lambda(z)} \;\paren{W\xi^{\prime\prime}}(z)\,d\mu(z).
	\]
  By Assumption \ref{ass:SR} we have $r_{\alpha}\geq 0$ and $q_{\alpha}\geq 0$ 
	for all $\alpha>0$ (see \eqref{eq:r_bound}).  
	Therefore, the right hand side of the last equation is non-negative if 
	$(W\xdag)(z) (W\xi^{\prime\prime})(z)\geq 0$ for $\mu$-almost all $z\in\Omega$. 
  This may be achieved by replacing $(W\xi^{\prime\prime})(z)$ by 
	$s(z)(W\xi^{\prime\prime})(z)$ with a measurable function $s:\Omega\to \{-1,1\}$. 
	This shows that \eqref{eq:signXiprime} holds true if $\xi^{\prime}$ is 
	replaced by $UW^*(s\cdot (W\xi^{\prime\prime}))$. 
	
	With this choice of $\alpha^\prime$ and $\xi^{\prime}$ we can estimate
	\begin{align*}
		\inf_{\alpha>0} \sup_{\norm{\xi}{} \leq \delta} \norm{R_{\alpha}(T\xdag+\xi)-\xdag}{}
		&\leq \inf_{\alpha>0} \sup_{\norm{\xi}{} \leq \delta} 
		\left[\norm{r_{\alpha}(T^*T)\xdag}{}+\norm{R_\alpha \xi}{}\right] \\
		&\leq  \norm{r_{\alpha^\prime}(T^*T)\xdag}{}+
		 \sup_{\norm{\xi}{} \leq \delta}\norm{R_{\alpha^\prime} \xi}{}\\
   &\leq		 \norm{r_{\alpha^\prime}(T^*T)\xdag}{}+
	\frac{1}{1-\epsilon}\norm{R_{\alpha^\prime} \xi^\prime}{}\\
		&\leq \frac{2}{1-\epsilon} \inf_{0<\alpha\leq \alphamax}
		\left[\norm{r_{\alpha}(T^*T)\xdag}{}+\norm{R_\alpha \xi^\prime}{}\right]
	\end{align*}
	since the first term in the last line is monotonically increasing and the second term is monotonically decreasing in $\alpha$. 
As $(\Tnorm{x}{}+\Tnorm{y}{})^2 \leq 2 \Tnorm{x+y}{}^2$ for $\langle x, y \rangle\geq 0$, we obtain via \eqref{eq:signXiprime} that
	\begin{align*}
		\inf_{\alpha>0} \sup_{\norm{\xi}{} \leq \delta} \norm{R_{\alpha}(T\xdag+\xi)-\xdag}{}
		&\leq \frac{2\sqrt{2}}{1-\epsilon} \inf_{0<\alpha\leq \alphamax} \norm{R_{\alpha}(T\xdag+\xi^\prime)-\xdag}{}\\
		&\leq \frac{2\sqrt{2}}{1-\epsilon} \sup_{\norm{\xi}{} \leq \delta} \inf_{0<\alpha\leq \alphamax} \norm{R_{\alpha}(T\xdag+\xi)-\xdag}{}.
	\end{align*}
	As $\epsilon>0$ was arbitrary, we have proven \eqref{eq:infSupSwap}. Let us now show the properties of $\deltaset(\xdag)$: 
	\begin{enumerate}
	\item 
	If $q_{\alpha}(\lambda)$ and $r_{\alpha}(\lambda)$ are continuous in $\alpha$, 
	then $\|r_{\alpha}(T^*T)\xdag\|$ is continuous in $\alpha$ by Lebesgue's 
	Dominated Convergence Theorem, and so is $\alpha\mapsto \|R_{\alpha}\|$. 
	Moreover, $\lim_{\alpha\to 0}\|r_{\alpha}(T^*T)\xdag\|=0$ as 
	$T$ is assumed to be injective, $\alpha\mapsto \|R_{\alpha}\|$ is 
	decreasing, by \eqref{eq:Ralpha_norm} we have $\|R_{\alpha}\|\to 0$ as 
	$\alpha\to \alphamax$, and by \eqref{eq:r_bound} we have 
	$\|r_{\alpha}(T^*T)\xdag\|\leq\|\xdag\|$ for all $\alpha$. As $\xdag\neq 0$,  
	the case $r_{\alpha}(T^*T)\xdag =0$ for all $\alpha>0$ can be excluded by 
	noting that 
\begin{equation}\label{eq:rneq0}
r_{\alpha}(\lambda)>0\qquad \mbox{for }\alpha>\Cq\lambda
\end{equation}	
since $q_{\alpha}(\lambda)\leq\Cq/\alpha<1/\lambda$. 
	This shows that $\|r_{\alpha}(T^*T)\xdag\|/\|R_\alpha\|$ tends to $0$ as 
	$\alpha\to 0$ and to $\infty$ as $\alpha\to \alphamax$. 
	Now $\deltaset(\xdag)=(0,\infty)$ follows from continuity and the intermediate 
	value theorem. 
	\item If $q_{\alpha}(\lambda)$ is not continuous or $\alphamax\neq\infty$, we still have the same 
	limiting behaviour of $\|r_{\alpha}(T^*T)\xdag\|/\|R_\alpha\|$ for 
	$\alpha\to 0$. However, 
	we have to exclude the case $r_{\alpha}(T^*T)\xdag=0$ for all $\alpha$ 
	in some neighborhood of $0$ to ensure that $0$ is a cluster point of 
	$\deltaset(\xdag)$. This is achieved by \eqref{eq:rneq0} and 
	the assumption $E_\alpha\xdag\neq 0$ for all $\alpha>0$. 
	\item For Landweber iteration and Lardy's method we have 
	\[
	\deltaset(\xdag)=\braces{\delta_n(\xdag):= 
	\frac{\|r_{1/n}(T^*T)\xdag\|}{\|R_{1/n}\|}: n\in\Nset}\qquad \mbox{and}\qquad 
	\gamma = \sup_{n\in\Nset}\frac{\delta_n(\xdag)}{\delta_{n+1}(\xdag)}.
	\]	
	Using \eqref{eq:Ralpha_norm} we can bound quotients of the denominators 
	of $\delta_n(\xdag)$ by 
	\begin{align*}
	\Tnorm{R_{1/(n+1)}}{} 
&= \sup_{\lambda\in\sigma(T^*T)} \frac{1-r_{1/(n+1)}(\lambda)}{\sqrt\lambda}
\leq \sup_{\lambda\in\sigma(T^*T)} \frac{1-r_{1/(n+1)}(\lambda)}{1-r_{1/n}(\lambda)}
\sup_{\lambda\in\sigma(T^*T)} \frac{1-r_{1/n}(\lambda)}{\sqrt\lambda}\\
&\leq \sup_{\lambda\in[0,\|T^*T\|]} \frac{1-r_{1/(n+1)}(\lambda)}{1-r_{1/n}(\lambda)}
\Tnorm{R_{1/n}}{}.
	\end{align*}
	Now setting $x=(1-\mu \lambda)$ and $x=\beta/(\beta+\lambda)$ for Landweber iteration and Lardy's method respectively we obtain the bound
	\begin{equation*}
		\frac{\Tnorm{R_{1/(n+1)}}{}}{\Tnorm{R_{1/n}}{}}
		\leq \sup_{x\in [0,1]} \frac{1-x^{n+1}}{1-x^n}
		=\sup_{x\in [0,1]} \rbra{x+\frac{1-x}{1-x^n}}
		\leq 2.
	\end{equation*}	
For Landweber iteration quotients of enumerators of $\delta_n(\xdag)$ are bounded by
	\begin{equation*}
		\frac{\Tnorm{r_{1/n}(T^*T)\xdag}{}^2}{\Tnorm{r_{1/(n+1)}(T^*T)\xdag}{}^2}
		=\frac{\int_0^{\Tnorm{T^*T}{}} (1-\mu\lambda)^{2n} \,\dd \Tnorm{E_\lambda \xdag}{}^2}{\int_0^{\Tnorm{T^*T}{}} (1-\mu\lambda)^{2n+2} \,\dd \Tnorm{E_\lambda \xdag}{}^2}
		\leq \frac{1}{\rbra{1-\mu \norm{T^*T}{}}^2}
	\end{equation*}
	since $(1-\mu\lambda)\geq 1-\mu \norm{T^*T}{}$ for all $\lambda\leq \norm{T^*T}{}$. Similar for Lardy's method we use $\beta/(\beta+\lambda)\geq\beta/(\beta+\norm{T^*T}{})$ for all $\lambda\leq \norm{T^*T}{}$ to obtain
	\begin{equation*}
		\frac{\Tnorm{r_{1/n}(T^*T)\xdag}{}^2}{\Tnorm{r_{1/(n+1)}(T^*T)\xdag}{}^2}
		=\frac{\int_0^{\Tnorm{T^*T}{}} \rbra{\frac{\beta}{\beta+\lambda}}^{2n} \,\dd \Tnorm{E_\lambda \xdag}{}^2}{\int_0^{\Tnorm{T^*T}{}} \rbra{\frac{\beta}{\beta+\lambda}}^{2n+2} \,\dd \Tnorm{E_\lambda \xdag}{}^2}
		\leq \rbra{1+\frac{\Tnorm{T^*T}{}}{\beta}}^2. 
	\end{equation*}
This shows that $\gamma$ is finite in both cases. \qedhere
\end{enumerate}
\end{proof}

The difference between our Theorem \ref{thm:det_noise_equiv} 
and the corresponding results in \cites{Neubauer1997,Albani2015} 
is analogous to the difference between the concepts of weakly and 
strongly quasioptimal parameter choice rules as introduced 
by Raus \& H\"amarik \cite{RH:07}. They called a parameter 
choice rule $\alpha_*:[0,\infty)\times \Yspace\to[0,\infty)$ 
\emph{weakly quasioptimal} (or simply quasioptimal) for the 
regularization method $\{R_{\alpha}\}$ if 
there exists a constant $C>0$ such that 
\begin{equation}\label{eq:quasioptimal}
\|R_{\alpha_*(\delta,\gobs)}\gobs-\xdag\|
\leq C\inf_{\alpha>0}\sup_{\|\xi\|\leq \delta}
\|R_{\alpha}(T\xdag+\xi)-\xdag\|+\mathcal{O}(\delta)
\end{equation}
for all $\xdag\in\Xspace$ and all $\gobs\in\Yspace$ 
with $\|\gobs-T\xdag\|\leq \delta$ as $\delta\to 0$. 
(Our formulation of this definition 
slightly differs from that in \cite{RH:07}, but it is equivalent 
due to the arguments at the beginning of part 2 of the proof of 
Theorem \ref{thm:det_noise_equiv}.) 
A parameter choice rule $\alpha_*$ is called 
\emph{strongly quasioptimal} if there exists $C>0$ such that 
\begin{equation}\label{eq:stronglyquasioptimal}
\|R_{\alpha_*(\delta,\gobs)}\gobs-\xdag\|
\leq C\sup_{\|\xi\|\leq \delta}\inf_{\alpha>0}\|R_{\alpha}(T\xdag+\xi)-\xdag\|+\mathcal{O}(\delta)
\end{equation}
for all $\xdag\in\Xspace$ and all $\gobs\in\Yspace$ with 
$\|\gobs-T\xdag\|\leq \delta$.
Lemma \ref{lem:infSupSwap} shows that the notions of weak and strong 
quasioptimality coincide for continuous regularization methods. 
In \cite{RH:07} it is shown that the discrepancy principle is strongly 
quasioptimal for regularization methods of infinite qualification such 
as Landweber iteration, Lardy's method or spectral cut-off (but 
it is not even weakly quasioptimal for Tikhonov regularization 
and iterated Tikhonov regularization). For iterated Tikhonov 
regularization the Raus-Gfrerer rule is weakly quasioptimal by 
results in \cite{raus:85}, and hence by Lemma \ref{lem:infSupSwap} also 
strongly quasioptimal. 
Moreover, Lepski{\u\i}'s rule was shown to be weakly 
quasioptimal for all considered methods (\cite{RH:07}), and hence 
it is strongly quasioptimal for (iterated) Tikhonov regularization,  
Showalter's method, and modified spectral cut-off  
by Lemma \ref{lem:infSupSwap}. 
The Monotone Error Rule is strongly quasioptimal for 
Landweber iteration and Lardy's method (\cite{RH:07}).  
In most cases the constant $C$ in \eqref{eq:quasioptimal} or 
\eqref{eq:stronglyquasioptimal} can be given explicitly.

\begin{theorem}\label{theo:det_noise_aposteriori}
Suppose Assumption \ref{ass:SR} holds true, let  $\alpha_*$ 
be weakly quasioptimal parameter choice rule, let $\psi_{\kappa}$ be concave, 
and assume that \eqref{eq:kappa_growth} holds true. 
Then the following statements are equivalent for any $\xdag\in\Xspace$ 
for which $\Delta(\xdag)$ satisfies \eqref{eq:gaps_deltaset}:
\begin{enumerate}
	\item\label{item:corRate} $\sup_{\alphamax\geq\alpha>0}\frac{1}{\kappa(\alpha)^2}\|r_{\alpha}(T^*T)\xdag\|^2<\infty$.
	\item\label{item:corWeak} For any finite $\delta_0>0$ we have
	\begin{equation*}
		\sup_{\delta\in(0,\delta_0]} \frac{1}{\psi_{\kappa}(\delta^2)} \sup_{\|\xi\|\leq\delta}\|R_{\alpha_*(\delta,T\xdag+\xi)}(T\xdag+\xi)-\xdag\|^2 <\infty.
	\end{equation*}
\end{enumerate}
\end{theorem}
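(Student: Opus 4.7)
The plan is to combine three earlier ingredients: Theorem \ref{thm:det_noise_equiv}, which identifies (\ref{item:corRate}) with the worst-case rate $\inf_\alpha\sup_\xi\|\cdot\|^2\asymp\psi_\kappa(\delta^2)$; Lemma \ref{lem:infSupSwap}, which exchanges $\inf_\alpha$ and $\sup_\xi$ up to a universal constant for $\delta\in\deltaset(\xdag)$; and the defining inequality \eqref{eq:quasioptimal} of weak quasioptimality, which bridges the actual reconstruction error with the inf-sup rate. Concavity of $\psi_\kappa$ will be used repeatedly, both to absorb the $\mathcal{O}(\delta)$ remainder from \eqref{eq:quasioptimal} (via $\delta^2/\psi_\kappa(\delta^2)\to 0$ as $\delta\to 0$, a limit already established in the proof of Theorem \ref{thm:equiv_vsc_decay}) and to obtain the sub-homogeneity estimate $\psi_\kappa(s t)\leq s\psi_\kappa(t)$ for $s\geq 1$.

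For (\ref{item:corRate}) $\Rightarrow$ (\ref{item:corWeak}) I would first apply Theorem \ref{thm:det_noise_equiv} to obtain $\inf_{0<\alpha\leq\alphamax}\sup_{\|\xi\|\leq\delta}\|R_\alpha(T\xdag+\xi)-\xdag\|^2\leq C_1\psi_\kappa(\delta^2)$ for $\delta\leq\Theta_\kappa(\alphamax)$. Setting $\gobs=T\xdag+\xi$ in \eqref{eq:quasioptimal}, squaring using $(a+b)^2\leq 2a^2+2b^2$, and taking the supremum over $\|\xi\|\leq\delta$, one gets
\[
\sup_{\|\xi\|\leq\delta}\|R_{\alpha_*(\delta,\gobs)}\gobs-\xdag\|^2\leq 2C^2 C_1\psi_\kappa(\delta^2)+2C'^2\delta^2,
\]
and dividing by $\psi_\kappa(\delta^2)$ produces (\ref{item:corWeak}) for small $\delta$; on any subinterval of $(0,\delta_0]$ bounded away from $0$ the ratio is controlled by a direct norm estimate together with continuity and positivity of $\psi_\kappa$.

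For the converse, the pointwise inequality $\inf_\alpha\|R_\alpha(T\xdag+\xi)-\xdag\|\leq\|R_{\alpha_*(\delta,T\xdag+\xi)}(T\xdag+\xi)-\xdag\|$ combined with (\ref{item:corWeak}) immediately gives $\sup_{\|\xi\|\leq\delta}\inf_\alpha\|\cdot\|^2\leq C_2\psi_\kappa(\delta^2)$, and Lemma \ref{lem:infSupSwap} then promotes this to $\inf_\alpha\sup_{\|\xi\|\leq\delta}\|\cdot\|^2\leq 8C_2\psi_\kappa(\delta^2)$ whenever $\delta\in\deltaset(\xdag)\cap(0,\delta_0]$. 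To extend the estimate to arbitrary $\delta\in(0,\delta_0]$, I would exploit the gap condition \eqref{eq:gaps_deltaset}: given $\delta$, pick $\delta_+\in\deltaset(\xdag)$ with $\delta\leq\delta_+\leq(\gamma+\epsilon)\delta$; monotonicity of $\sup_{\|\xi\|\leq\delta}$ in $\delta$ together with $\psi_\kappa(s t)\leq s\psi_\kappa(t)$ then yields $\inf_\alpha\sup_{\|\xi\|\leq\delta}\|\cdot\|^2\leq 8(\gamma+\epsilon)^2 C_2\psi_\kappa(\delta^2)$ for all sufficiently small $\delta$, and a final application of Theorem \ref{thm:det_noise_equiv} delivers (\ref{item:corRate}). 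The main obstacle I anticipate is exactly this last interpolation: when the right endpoint of a gap is only a limit point of $\deltaset(\xdag)$, one cannot pick $\delta_+$ in $\deltaset(\xdag)$ directly. The cleanest fix is to take a sequence $\delta_n\in\deltaset(\xdag)$ with $\delta_n\downarrow\delta_+$, invoke Lemma \ref{lem:infSupSwap} at each $\delta_n$, and pass to the limit using continuity of $\delta\mapsto\sup_{\|\xi\|\leq\delta}\|R_\alpha(T\xdag+\xi)-\xdag\|$ and of $\psi_\kappa$.
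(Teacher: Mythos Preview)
Your argument is correct and follows the same structure as the paper: combine Theorem~\ref{thm:det_noise_equiv} with \eqref{eq:quasioptimal} for (\ref{item:corRate})$\Rightarrow$(\ref{item:corWeak}), and Lemma~\ref{lem:infSupSwap} together with Theorem~\ref{thm:det_noise_equiv} for the converse, using the gap condition \eqref{eq:gaps_deltaset} to pass from $\delta\in\deltaset(\xdag)$ to all small $\delta$.

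One remark on the gap-filling step. You pick $\delta_+\in\deltaset(\xdag)$ with $\delta\leq\delta_+\leq(\gamma+\epsilon)\delta$ and use that $\delta\mapsto\inf_\alpha\sup_{\|\xi\|\leq\delta}(\cdots)$ is increasing; this is the natural direction. The paper instead selects $\underline{\delta}\in[\delta/\gamma,\delta]\cap\deltaset(\xdag)$, and as literally written that choice does not directly control the worst-case error at $\delta$ (the quantity increases when passing from $\underline{\delta}$ to $\delta$), so your version is in fact the cleaner argument here. Your closure worry is also slightly over-engineered: if the right endpoint $b$ of the gap containing $\delta$ lies only in $\overline{\deltaset(\xdag)}$, note that $b\leq\gamma a<\gamma\delta$ strictly, so any sequence $\delta_n\in\deltaset(\xdag)$ with $\delta_n\to b$ eventually satisfies $\delta<\delta_n<\gamma\delta$; then monotonicity in $\delta$ plus continuity of $\psi_\kappa$ alone give $\inf_\alpha\sup_{\|\xi\|\leq\delta}\leq 8C_2\psi_\kappa(\delta_n^2)\to 8C_2\psi_\kappa(b^2)\leq 8C_2\gamma^2\psi_\kappa(\delta^2)$, with no need to invoke continuity of the error functional itself.
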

\begin{proof}
	\emph{(\ref{item:corRate}) $\Rightarrow$ (\ref{item:corWeak}):} Using 
	Theorem \ref{thm:det_noise_equiv} and the definition of weak quasioptimality 
	\eqref{eq:quasioptimal} we see that there exists a constant $C$ such that for all $\delta>0$ the estimate
	\begin{equation*}
		\sup_{\snorm{\xi}{}\leq\delta} \snorm{R_{\alpha_*(\delta,T\xdag+\xi)}(T\xdag+\xi)-\xdag}{}^2 \leq C \rbra{\psi_{\kappa}(\delta^2) + \delta^2}
	\end{equation*}
	holds true. 
	Since $\psi_{\kappa}$ is concave $\lim_{t\rightarrow 0} t/\psi_{\kappa}(t)$ 
	is bounded and hence we obtain (\ref{item:corWeak}) for any finite $\delta_0>0$.
	
	\emph{(\ref{item:corWeak}) $\Rightarrow$ (\ref{item:corRate}):} 
	By Lemma \ref{lem:infSupSwap} we have
	\begin{align*}
	&\inf_{\alphamax\geq\alpha>0} \sup_{\snorm{\gobs-T\xdag}{}\leq\delta} 
	\snorm{R_{\alpha}(\gobs)-\xdag}{}^2 
	\leq 8\sup_{\snorm{\gobs-T\xdag}{}\leq\delta} \inf_{\alphamax\geq\alpha>0} 
	\snorm{R_{\alpha}(\gobs)-\xdag}{}^2 \\
	&\leq 8\sup_{\snorm{\gobs-T\xdag}{}\leq\delta} 
		\snorm{R_{\alpha_*(\delta,\gobs)}(\gobs)-\xdag}{}^2 
	\leq 8C(\xdag,\delta_0)\psi_{\kappa}(\delta)	
	\end{align*}
	for all $\delta\in \Delta(\xdag)\cap [0,\delta_0]$. 
	Now choose $\delta_0= \Theta_\kappa(2B\beta/(1-\Cdiag)^2)$ with $\beta=\lmin{\alphamax}{\Tnorm{T^*T}{}}$ and first 
	assume that $\Delta(\xdag)\cap(0,\delta_0] = (0,\delta_0]$. 
	Then we obtain 
	$\sup_{\alpha\in (0,\beta]}\frac{1}{\kappa(\alpha)^2}\|r_{\alpha}(T^*T)\xdag\|^2<\infty$ following the proof of Theorem \ref{thm:det_noise_equiv}. 
	As $\|r_{\alpha}(T^*T)\xdag\|^2\leq\|\xdag\|^2$ for all $\alpha>0$, 
	this is equivalent to (\ref{item:corRate}).
	
	If $\Delta(\xdag)$ has gaps, but satisfies \eqref{eq:gaps_deltaset}, 
	then for each $\delta\in (0,\delta_0]$ we can find 
	$\underline{\delta}\in [\delta/\gamma,\delta]$ with 
	$\underline{\delta}\in \deltaset(\xdag)$. 
	By the concavity of $\psi_{\kappa}$ we have 
	$\psi_{\kappa}(\delta)/\psi_{\kappa}(\underline{\delta})\leq \gamma$. 
	Therefore, replacing the supremum over $\delta\in(0,\delta_0]\cap\Delta(\xdag)$ 
	by the supremum over $\delta\in (0,\delta_0]$ increases the value at most 
	by a factor $\gamma$. 
\end{proof}

\section{Converse results for white noise}\label{sec:converse_white}
We now want to prove a theorem similar to Theorem \ref{thm:det_noise_equiv} for the white noise error model \eqref{eq:white_noise_model} using the expected square 
error as error measure.  By the bias-variance decomposition this equals 
\begin{equation}\label{eq:biasVarianceDecomposition}
	\EW{\norm{\widehat{\sol}_{\alpha}-\xdag}{\Xspace}^2}=\norm{\EW{\widehat{\sol}_{\alpha}}-\xdag}{\Xspace}^2 +\varepsilon^2 \EW{\norm{R_\alpha W}{}^2}.
\end{equation}
By Theorem \ref{thm:biasDecayEquivalence} the bias can be controlled by assuming that $\xdag\in \XTkappa$. 
The variance is given by $\varepsilon^2 \EW{\norm{R_\alpha W}{}}^2
= \varepsilon^2\mathrm{trace}(R_{\alpha}^*R_{\alpha})$, i.e.\ 
as opposed to the deterministic the effect of the noise is not described 
by the maximum, but by the sum of the eigenvalues of $R_{\alpha}^*R_{\alpha}$. 
Often the sum grows faster than the maximum as $\alpha\to 0$, and the specific 
rate depends not only on the regularization methods, but also on the eigenvalue 
distribution of the operator. We will assume that there exists a constant $D\geq 1$ 
and a monotonically decreasing function $v\in C((0,\infty))$ such that
\begin{subequations}\label{eqs:bounds_variance}
\begin{equation}\label{eq:bounds_variance}
			\frac{1}{D} v(\alpha)^2 \leq \EW{\norm{R_\alpha W}{}^2} \leq D v(\alpha)^2\qquad \forall \alphamax\geq\alpha>0
\end{equation}
with limits $\lim_{t\rightarrow 0} v(t)=\infty$ and  
$\lim_{t\rightarrow \infty} v(t)=0$. Note the in the deterministic case, i.e.\ 
for $\EW{\norm{R_\alpha W}{}^2}$ replaced by $\|R_{\alpha}\|^2$, we could simply 
choose $v(\alpha)= c/\sqrt{\alpha}$. 
Moreover, we will assume  that 
$v(\alpha)$ does not grow faster than polynomially as $\alpha\to 0$, 
or equivalently, that the inverse function 
$v^{-1}:(0,\infty)\to (0,\infty)$ does not decay faster than polynomially 
at infinity in the sense that there exists $p\geq 1$ such that 
\begin{equation}\label{eq:growth_vinv}
v^{-1}(rt)\geq r^{-q} v^{-1}\left(t\right)
\end{equation} 
for all $t>0$ and $r\geq 1$.
\end{subequations}

It was shown in \cite{BHMR:07} that 
$\EW{\|R_{\alpha}W\|^2}\sim \EW{\|(T(I-E_{\alpha}^{T^*T}))^{\dagger}\|^2}$ under 
certain conditions, and explicit expressions for $v$ have been derived.

\begin{theorem}\label{thm:white_noise_equiv}
	Let Assumption \ref{ass:SR} and \eqref{eqs:bounds_variance} hold true and define
\begin{equation*}
\psi_{\kappa,v} (t):=\kappa \rbra{\Theta^{-1}_{\kappa,v}\rbra{\sqrt{t}}}^2 
\qquad \text{with} \qquad \Theta_{\kappa,v} (\alpha):=\frac{\kappa(\alpha)}{v(\alpha)}.
\end{equation*}
Moreover, assume that $\kappa$ satisfies \eqref{eq:kappa_growth}.
Then for $\xdag\in\Xspace$ the following statements are equivalent:
	\begin{enumerate}
\item\label{item:wneSpace} $A:=\sup_{0<\alpha\leq \alphamax}\frac{1}{\kappa(\alpha)^2} \Tnorm{r_{\alpha}(T^*T)\xdag}{\Xspace}^2<\infty$ 
\item\label{item:wneExpectation} $B:=\sup_{0<\varepsilon\leq\Theta_{\kappa,v}(\alphamax)} \frac{1}{\psi_{\kappa,v}(\varepsilon^2)}\inf_{0<\alpha\leq\alphamax} \TEW{\Tnorm{R_{\alpha}(T\xdag+\varepsilon W)-\xdag}{\Xspace}^2}<\infty$.
\end{enumerate}
	More precisely,
\begin{equation*}
B\leq A+D\qquad \text{and} \qquad 
A\leq \lmax{\lmax{B}{B(2BD)^{pq}}}{\frac{\Tnorm{\xdag}{}^2}{\kappa \rbra{v^{-1}(\sqrt{2BD} v(\alphamax))}}}.
\end{equation*}
\end{theorem}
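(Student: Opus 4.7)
The plan is to adapt the two-step argument of Theorem \ref{thm:det_noise_equiv} to the stochastic setting, with the deterministic variance surrogate $\|R_\alpha\|^2 \delta^2$ replaced by the expected variance $\varepsilon^2\EW{\|R_\alpha W\|^2}$ that is two-sided controlled by $v(\alpha)^2$ via \eqref{eq:bounds_variance}. The function $\Theta_{\kappa,v}(\alpha)=\kappa(\alpha)/v(\alpha)$ now plays the role of $\Theta_\kappa(\alpha)=\sqrt{\alpha}\kappa(\alpha)$, and the admissibility condition $\varepsilon\leq \Theta_{\kappa,v}(\alphamax)$ is exactly what guarantees that the optimally balanced parameter for noise level $\varepsilon$ still lies in $(0,\alphamax]$. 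Throughout, the bias-variance decomposition \eqref{eq:biasVarianceDecomposition} is the fundamental vehicle.

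For the direction (\ref{item:wneSpace}) $\Rightarrow$ (\ref{item:wneExpectation}) I would combine the hypothesis $\|r_\alpha(T^*T)\xdag\|^2 \leq A\kappa(\alpha)^2$ with $\EW{\|R_\alpha W\|^2}\leq Dv(\alpha)^2$ to obtain
$\EW{\|R_\alpha(T\xdag+\varepsilon W)-\xdag\|^2} \leq A\kappa(\alpha)^2 + D\varepsilon^2 v(\alpha)^2$.
Evaluating at $\alpha=\Theta_{\kappa,v}^{-1}(\varepsilon)$ forces $\varepsilon v(\alpha)=\kappa(\alpha)$, so both summands equal $\psi_{\kappa,v}(\varepsilon^2)$, yielding $B\leq A+D$ without further effort.

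For the converse (\ref{item:wneExpectation}) $\Rightarrow$ (\ref{item:wneSpace}) the variance lower bound gives $\EW{\|R_\alpha(T\xdag+\varepsilon W)-\xdag\|^2}\geq \|r_\alpha(T^*T)\xdag\|^2 + \varepsilon^2 v(\alpha)^2/D$. The bias is non-decreasing in $\alpha$ by Assumption \ref{ass:SR}(\ref{it:reg_incr}) while $v$ is decreasing, so the infimum over $\alpha\in(0,\alphamax]$ is bounded below by $\min(\|r_{\alpha^*}(T^*T)\xdag\|^2,\,\varepsilon^2 v(\alpha^*)^2/D)$ for any admissible $\alpha^*$. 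Choosing $\alpha^*=v^{-1}(\sqrt{2BD}\,v(\Theta_{\kappa,v}^{-1}(\varepsilon)))$ makes the variance branch equal $2B\psi_{\kappa,v}(\varepsilon^2)$; this branch cannot be the minimum without violating hypothesis (\ref{item:wneExpectation}), so the bias branch is the smaller, giving $\|r_{\alpha^*}(T^*T)\xdag\|^2 \leq B\psi_{\kappa,v}(\varepsilon^2) = B\kappa(\Theta_{\kappa,v}^{-1}(\varepsilon))^2$. To reach an arbitrary $\alpha\leq \alphamax$ I invert the correspondence: set $\tilde\alpha:=v^{-1}(v(\alpha)/\sqrt{2BD})$ and $\varepsilon:=\Theta_{\kappa,v}(\tilde\alpha)$, so that $\alpha^*(\varepsilon)=\alpha$. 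The polynomial decay bound \eqref{eq:growth_vinv} applied with $r=\sqrt{2BD}$ gives $\tilde\alpha\leq (2BD)^{q/2}\alpha$, and \eqref{eq:kappa_growth} then transfers this to $\kappa(\tilde\alpha)\leq (2BD)^{pq/2}\kappa(\alpha)$, producing the quantitative estimate $\|r_\alpha(T^*T)\xdag\|^2\leq B(2BD)^{pq}\kappa(\alpha)^2$.

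The main obstacle I expect is boundary handling. The parameter $\tilde\alpha$ above is only admissible when $\alpha\leq v^{-1}(\sqrt{2BD}\,v(\alphamax))$; for $\alpha$ closer to $\alphamax$ one must retreat to the trivial bound $\|r_\alpha(T^*T)\xdag\|\leq\|\xdag\|$ combined with monotonicity of $\kappa$, and this produces the last term $\|\xdag\|^2/\kappa(v^{-1}(\sqrt{2BD}v(\alphamax)))$ in the stated bound on $A$. A secondary subtlety is the regime $\sqrt{2BD}<1$, where the balanced $\alpha^*$ can exceed $\Theta_{\kappa,v}^{-1}(\varepsilon)$ and the above power-transfer no longer improves on the direct bound $\|r_{\alpha^*}(T^*T)\xdag\|^2\leq B\psi_{\kappa,v}(\varepsilon^2)$; this is why the final inequality carries the outer $\max$ between $B$ and $B(2BD)^{pq}$.
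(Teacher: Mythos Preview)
Your proposal is correct and follows essentially the same route as the paper's proof: the bias--variance decomposition \eqref{eq:biasVarianceDecomposition} combined with the two-sided variance control \eqref{eq:bounds_variance}, the balancing choice $\alpha^*(\varepsilon)=v^{-1}\!\bigl(\sqrt{2BD}\,v(\Theta_{\kappa,v}^{-1}(\varepsilon))\bigr)$, the contradiction argument forcing the bias branch to be the minimum, and then the inversion together with \eqref{eq:growth_vinv} and \eqref{eq:kappa_growth} to obtain the factor $(2BD)^{pq}$, with the trivial bound $\|r_\alpha(T^*T)\xdag\|\leq\|\xdag\|$ handling the residual range near $\alphamax$. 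Your identification of the threshold $v^{-1}(\sqrt{2BD}\,v(\alphamax))$ and of the $\sqrt{2BD}<1$ regime (where $\tilde\alpha<\alpha$ and monotonicity of $\kappa$ gives the plain factor $B$) matches the paper exactly.
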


\begin{proof}
	\emph{(\ref{item:wneSpace}) $\Rightarrow$ (\ref{item:wneExpectation}):} 
	Set $\widehat{\sol}_{\alpha}:=R_{\alpha}(T\xdag+\varepsilon W)$. 
	By (\ref{item:wneSpace}) and Theorem \ref{thm:biasDecayEquivalence} we can bound 
	$\Tnorm{\TEW{\widehat{\sol}_{\alpha}}-\xdag}{\Xspace}^2
	= \Tnorm{r_{\alpha}(T^*T)\xdag}{\Xspace}^2	\leq A \kappa(\alpha)^2$, 
	and by assumption \eqref{eq:bounds_variance} we can estimate 
	$\TEW{\Tnorm{R_\alpha W}{}^2}\leq D v^2(\alpha)$. Hence,
	\begin{equation*}
		\EW{\norm{\widehat{\sol}_{\alpha}-\xdag}{\Xspace}^2}\leq A \kappa^2(\alpha)+ D \varepsilon^2 v^2(\alpha).
	\end{equation*}
The minimum over the right hand side is approximately attained if 
$\kappa(\alpha)=\varepsilon v(\alpha)$ or equivalently if 
$\alpha=\Theta^{-1}_{\kappa,v}(\varepsilon)$. The equality 
$\kappa(\alpha)=\varepsilon v(\alpha)$ implies in particular that  
\begin{equation}\label{eq:def_psi_kappav_alt}
	\psi_{\kappa,v}(\varepsilon^2)=\varepsilon^2 v (\Theta^{-1}_{\kappa,v}(\varepsilon))^2.
\end{equation}	
Therefore, for all $\varepsilon>0$  we obtain
\begin{equation*}
		\inf_{0<\alpha\leq\alphamax} \EW{\norm{\widehat{\sol}_{\alpha}-\xdag}{\Xspace}^2}\leq \sbra{A+D} \kappa \rbra{\Theta^{-1}_{\kappa,v}(\varepsilon)}^2
	\end{equation*}
and can choose $B=A+D$.
	
	\emph{(\ref{item:wneExpectation}) $\Rightarrow$ (\ref{item:wneSpace}):} Using again \eqref{eq:biasVarianceDecomposition} and the lower bound on the variance 
in \eqref{eq:bounds_variance} we obtain
	\begin{equation*}
		\EW{\norm{\widehat{\sol}_{\alpha}-\xdag}{\Xspace}^2}\geq \norm{\EW{\widehat{\sol}_{\alpha}}-\xdag}{\Xspace}^2 +\frac{1}{D} v(\alpha)= \norm{r_{\alpha}(T^*T)\xdag}{\Xspace}^2+\frac{\varepsilon^2}{D} v(\alpha)^2.
	\end{equation*}
	Because the first term is increasing and the second term is decreasing in 
	$\alpha$,  we obtain 
\begin{align}\label{eq:aux_white_noise}
B \psi_{\kappa,v}(\varepsilon^2) \geq& 
		\inf_{\alpha>0}\bracket{\norm{r_{\alpha}(T^*T)\xdag}{\Xspace}^2+\frac{\varepsilon^2}{D} v(\alpha)^2}
		\geq \lmin{\norm{r_{\alpha_*}(T^*T)\xdag}{\Xspace}^2}
{\frac{\varepsilon^2}{D} v(\alpha_*)^2}.
\end{align}
for any $\alpha_*\in(0, \alphamax]$. We will choose 
	\begin{equation}\label{eq:wneAlphaStar}
		\alpha_*(\varepsilon)=v^{-1}\rbra{\sqrt{2BD}\, v \rbra{\Theta^{-1}_{\kappa,v}\rbra{\varepsilon}}}.
	\end{equation}
Using \eqref{eq:def_psi_kappav_alt} we see that the second term in the 
	minimum in \eqref{eq:aux_white_noise} equals twice the left hand side: 
	\[
	\frac{\varepsilon^2}{D} v(\alpha_*)^2 
	= 2B\varepsilon^2 v (\Theta^{-1}_{\kappa,v}(\varepsilon))^2
	= 2B \psi_{\kappa,v}(\varepsilon^2), 
	\]
	Therefore, $\norm{r_{\alpha_*}(T^*T)\xdag}{\Xspace}^2\geq\frac{\varepsilon^2}{D} v(\alpha_*)^2$ leads to a contradiction, i.e.\ the minimum 
in \eqref{eq:aux_white_noise} is attained at the first argument. We obtain 
	\begin{align*}
		\norm{r_{\alpha_*}(T^*T)\xdag}{\Xspace}^2
		\leq& B \psi_{\kappa,v}(\varepsilon^2) = 
		B \psi_{\kappa,v}\rbra{\rbra{\Theta_{\kappa,v}\rbra{v^{-1}\rbra{\frac{v(\alpha_*)}{\sqrt{BD}}}}}^2},
		\end{align*}
where we have solved \eqref{eq:wneAlphaStar} for $\varepsilon$ in the 
second step. 
Abbreviating $z:=v^{-1}\rbra{v(\alpha_*)/\sqrt{2BD}}$ and using 
\eqref{eq:def_psi_kappav_alt} again, we find 
		\begin{align*}
&\norm{r_{\alpha_*}(T^*T)\xdag}{\Xspace}^2
\leq  B \psi_{\kappa,v}\rbra{\rbra{\Theta_{\kappa,v}\rbra{z}}}^2 = 
B \rbra{\Theta_{\kappa,v}\rbra{z}}^2 v\rbra{\Theta^{-1}_{\kappa,v} \rbra{\Theta_{\kappa,v}\rbra{z}}}^2\\
\qquad\qquad&= B\kappa\rbra{z}^2
		\leq B \kappa\rbra{\paren{\lmax{1}{(2BD)^{q/2}}} \alpha_*}^2
		\leq \paren{\lmax{B}{B(2BD)^{pq}}} \kappa(\alpha_*)^2
	\end{align*}
	for all $\alpha\in (0,\alphamax]$ defined by \eqref{eq:wneAlphaStar} using \eqref{eq:growth_vinv}.
	
	For $\alpha\in(0,\alphamax]$ not of the given form note that $\alpha\geq v^{-1}(\sqrt{2BD} v(\alphamax))$ and using mononicity we obtain for these $\alpha$
	\begin{equation*}
		\frac{1}{\kappa(\alpha)^2}\norm{r_{\alpha}(T^*T)\xdag}{\Xspace}^2\leq \frac{\Tnorm{\xdag}{}^2}{\kappa \rbra{v^{-1}(\sqrt{2BD} v(\alphamax))}}
	\end{equation*}
	showing boundedness for all $\alpha\in(0,\alphamax]$.
\end{proof}

\begin{remark}\label{rem:white_noise_equiv} 
If assumption \eqref{eq:bounds_variance} is relaxed to 
\begin{equation}\label{eq:bounds_variance_relax}
			 v_-(\alpha)^2 \leq \EW{\norm{R_\alpha W}{}^2} \leq v_+(\alpha)^2\qquad \forall \alphamax\geq\alpha>0
\end{equation}
where possibly $\lim_{\alpha\to 0}(v_+/v_-)(\alpha)=\infty$, and 
$v_-$ satisfies \eqref{eq:growth_vinv}, then it can be seen by inspection of the proof that 
\begin{align*}
	\text{Theorem \ref{thm:white_noise_equiv}(\ref{item:wneSpace}) } &\Rightarrow\sup_{0<\varepsilon\leq  \Theta_{\kappa,v_+}(\alphamax)} \frac{1}{\psi_{\kappa,v_+}(\varepsilon^2)}\inf_{0<\alpha \leq \alphamax} \EW{\Tnorm{R_{\alpha}(T\xdag+\varepsilon W)-\xdag}{\Xspace}^2}<\infty,\\
	\text{Theorem \ref{thm:white_noise_equiv}(\ref{item:wneSpace}) } &\Leftarrow\sup_{0<\varepsilon\leq  \Theta_{\kappa,v_-}(\alphamax)} \frac{1}{\psi_{\kappa,v_-}(\varepsilon^2)}\inf_{0<\alpha \leq \alphamax} \EW{\Tnorm{R_{\alpha}(T\xdag+\varepsilon W)-\xdag}{\Xspace}^2}<\infty.
\end{align*}

This is relevant for operators $T$ with exponentially decaying singular values. 
Whereas for polynomial decay assumption \eqref{eq:bounds_variance} can 
be verified using results from \cite{BHMR:07}, for singular values 
with asympototic behaviour $\sigma_j(T)\sim \exp(-cj^{\beta})$ with 
$c,\beta>0$ one can only (easily) verify the relaxed condition 
\eqref{eq:bounds_variance_relax} with 
\[
v_-(\alpha) = c_-\alpha^{-1/2}\qquad \mbox{and}\qquad 
v_+(\alpha) = c_+\alpha^{-1/2-\tau}
\]
for any $\tau>0$ and some $c_-,c_+>0$. However, for such operators 
(\ref{item:wneSpace}) is typically satisfied only for
logarithmic functions $\kappa(\alpha) = (-\ln \alpha)^{-p}$ 
with some $p>0$ for $\xdag$ of finite smoothness. 
In this case one has 
\[
\psi_{\kappa,v_+}(t) = (-\ln t)^{-2p}(1+o(1)), 
\qquad t\to 0
\] 
independent of the choice of $\tau\in [0,\infty)$ (see \cite{mair:94}). 
Therefore, the equivalence in Theorem \ref{thm:white_noise_equiv} still 
holds true with either $v=v_-$ or $v=v_+$. 
\end{remark}

\section{Besov spaces as maxisets}\label{sec:spaceEquivalence}
We have seen in the previous sections that convergence rates of $\psi_\kappa$ to 
a true solution $\xdag$ for regularization methods are completely characterized by 
$\xdag\in\XTkappa$. Andreev \cite{Andreev2015a} showed that these spaces coincide 
with K-interpolation spaces with equivalent norms. Recall that for a 
Banach space $\Zspace\subset \Xspace$, which is continuously embedded 
in $\Xspace$ the K-functional is defined by 
\[
K_t(\sol):= \inf_{g\in \Zspace}\paren{\|\sol-g\|_{\Xspace}^2
+t^2\|g\|_{\Zspace}^2}^{1/2}.
\]
For $\nu\in (0,1)$ the K-interpolation space with fine index $\infty$ is defined by 
\[
(\Xspace,\Zspace)_{\nu,\infty}:=
\{\sol\in \Xspace:\altnorm{\sol}{(\Xspace,\Zspace)_{\nu,\infty}}<\infty\}
\quad \mbox{where}\quad 
\altnorm{\sol}{(\Xspace,\Zspace)_{\nu,\infty}}:=\sup_{t>0} t^{-\nu}K_t(\sol).
\]
Here we temporarily switch to a different norm notation because of the 
numereous indices. 
It can be shown that $(\Xspace,\Zspace)_{\nu,\infty}$ with this norm 
is a Banach space. 
If $\Zspace=(S^*S)^k(\Xspace)$
for a bounded linear operator $S:\Xspace\to\Yspace$ and 
some $k\in \Nset$ with norm $\norm{\sol}{\Zspace}:=\|(S^*S)^{-k}\sol\|_{\Xspace}$, 
Andreev \cite{Andreev2015a} showed that 
\begin{equation}\label{eq:andreev_equiv}
\Xspace^S_{\mathrm{id}^{k\nu}}=(\Xspace,(S^*S)^k(\Xspace))_{\nu,\infty}
\end{equation}
for $\nu\in (0,1)$ with 
\[
\sqrt{1-\nu}\altnorm{\sol}{(\Xspace,(S^*S)^k(\Xspace)_{\nu,\infty}} 
\leq \altnorm{\sol}{\Xspace^S_{\mathrm{id^{k\nu}}}}
\leq \rbra{1\!-\!\nu}^{1-\nu} \nu^{\nu} 
\altnorm{\sol}{(\Xspace,(S^*S)^k(\Xspace)_{\nu,\infty}}. 
\]

We further recall that the K-interpolation of certain Sobolev spaces yields 
Besov spaces. In particular,  
\begin{equation}\label{eq:besov_interp}
(L^2(\manifold),H^k(\manifold))_{\nu,\infty} = B^{k\nu}_{2,\infty}(\manifold)
\end{equation}
if $\manifold$ is a smooth Riemannian manifold with Laplace-Beltrami operator 
$\Delta$ satisfying Assumption \ref{ass:manifold} below 
and $H^k(\manifold):=(I-\Delta)^{-k/2}(L^2(\manifold))$ with 
norm $\|\sol\|_{H^k}:=\|(I-\Delta)^{k/2}\sol\|_{L^2}$ 
(see \cite[Chapter~7]{Triebel1992}). 
\begin{assumption}\label{ass:manifold}
	Let $\manifold$ be a connected smooth Riemanian manifold. Let $\manifold$
	\begin{itemize}
		\item be complete,
		\item have an injectivity radius $r>0$ and
		\item a bounded geometry.
	\end{itemize}
\end{assumption}
Here completeness means that all geodesics are infinitely extendable, the 
injectivity radius refers to the size of the domains in which the exponential 
map is bijective, and bounded geometry means that the determinant of the 
Riemannian metric is bounded from below by a positive constant and all its 
derivatives are bounded from above (see \cite{Triebel1992} for further 
discussions). 
Important examples of such manifolds include $\Rset^n$ and compact manifolds 
without boundaries. 
In the following we will consider operators $T\colon \Xspace=L^2(\manifold)\rightarrow \Yspace$ such that 
\begin{equation}\label{eq:T_Laplace}
	T^*T=\phiOp(-\Delta),
\end{equation}
where $\phiOp$ fulfills the following conditions:
\begin{assumption}\label{ass:operator}
	Let $\phiOp:[0,\infty)\rightarrow(0,\infty)$ such that
	\begin{itemize}
		\item $\phiOp$ is continuous,
		\item $\phiOp|_{[t_0,\infty)}$ is strictly decreasing for some 
		$t_0\geq 0$,
		\item $\phiOp(\mu)\rightarrow 0$ for $\mu \rightarrow \infty$. 
	\end{itemize}
\end{assumption}

Our aim of this section is to prove the following theorem:
\begin{theorem}\label{thm:normEquivalence}
	Let $\manifold$ fulfill Assumption \ref{ass:manifold}, $\phiOp$ fulfill Assumption \ref{ass:operator} and $s>0$. Let $T:L^2(\manifold)\rightarrow \Yspace$ be of the form \eqref{eq:T_Laplace} and define 
	\begin{equation*}
		\kappa(\alpha) :=
		\begin{cases} 
		0,& \text{if }\alpha=0\\
		\rbra{{\phiOp|_{[t_0,\infty)}}^{-1}(\alpha)}^{-1/2},&\text{if } \alpha\in ((0,\phiOp(t_0)]),\\
		t_0^{-1/2},&\text{if }\alpha>\phiOp(t_0).
		\end{cases}
	\end{equation*}
	Then $\Xspace^T_{\kappa^s}=\Bs$ with equivalent norms.
\end{theorem}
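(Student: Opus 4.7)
The plan is to show that both $\Xspace^T_{\kappa^s}$ and $\Bs$ are characterized, with equivalent norms, by the common ``tail-decay'' condition
\[
\norm{f}{L^2(\manifold)} + \sup_{\mu\geq 1} \mu^{s/2}\snorm{(I-E^{-\Delta}_\mu)f}{L^2(\manifold)} < \infty,
\]
so that the two spaces must coincide. First I would treat the $T$-side via the functional calculus applied to $T^*T=\phiOp(-\Delta)$, yielding $E^{T^*T}_\lambda=\chi_{\phiOp^{-1}([0,\lambda])}(-\Delta)$. Since $\phiOp$ is positive and continuous on the compact interval $[0,t_0]$, the number $m:=\min_{\mu\in[0,t_0]}\phiOp(\mu)$ is strictly positive, and for every $\lambda<m$ the set $\phiOp^{-1}([0,\lambda])$ is contained in $[t_0,\infty)$; monotonicity of $\phiOp|_{[t_0,\infty)}$ then gives $\phiOp^{-1}([0,\lambda])=[\psi(\lambda),\infty)$ with $\psi:=\phiOp|_{[t_0,\infty)}^{-1}$. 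Hence $E^{T^*T}_\lambda=I-E^{-\Delta}_{\psi(\lambda)-}$, and the substitution $\mu=\psi(\lambda)$ together with $\kappa(\lambda)^s=\mu^{-s/2}$ transforms the small-$\lambda$ part of the $\Xspace^T_{\kappa^s}$-norm into $\sup_{\mu\geq\psi(m)}\mu^{s/2}\snorm{(I-E^{-\Delta}_{\mu-})f}{L^2}$. On the remaining range $\lambda\geq m$ one has $\kappa(\lambda)^{-s}\leq t_0^{s/2}$ and $\snorm{E^{T^*T}_\lambda f}{L^2}\leq\norm{f}{L^2}$, so that part of the supremum is absorbed into $\norm{f}{L^2}$.

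For the Besov side, given $s>0$ I would pick any integer $k>s/2$ and set $\nu:=s/(2k)\in(0,1)$. Taking $S:=(I-\Delta)^{-1/2}$ as a bounded operator on $L^2(\manifold)$ we get $S^*S=(I-\Delta)^{-1}$ and $(S^*S)^k(L^2(\manifold))=H^{2k}(\manifold)$, so Andreev's identity \eqref{eq:andreev_equiv} combined with the Besov characterization \eqref{eq:besov_interp} gives
\[
\Xspace^S_{\mathrm{id}^{s/2}} = (L^2(\manifold),H^{2k}(\manifold))_{\nu,\infty} = \Bs
\]
with equivalent norms. A direct spectral computation for $S^*S=(I-\Delta)^{-1}$ yields $E^{S^*S}_\alpha=I-E^{-\Delta}_{(\alpha^{-1}-1)-}$ for $\alpha\in(0,1)$ and $E^{S^*S}_\alpha=I$ for $\alpha\geq 1$; substituting $\mu=\alpha^{-1}-1$ then shows that the $\Xspace^S_{\mathrm{id}^{s/2}}$-norm is equivalent to $\norm{f}{L^2}+\sup_{\mu\geq 0}(1+\mu)^{s/2}\snorm{(I-E^{-\Delta}_{\mu-})f}{L^2}$, which is again the common tail-decay quantity.

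Finally I would observe that changing the threshold in the supremum over $\mu$ from any fixed positive number to another only alters the quantity by a multiplicative constant, since on any bounded interval $[0,\mu_1]$ the weight $(1+\mu)^{s/2}$ is bounded and $\snorm{(I-E^{-\Delta}_\mu)f}{L^2}\leq\norm{f}{L^2}$. Combining the two characterizations yields $\Xspace^T_{\kappa^s}=\Bs$ with equivalent norms. The main obstacle I expect is keeping the bookkeeping tight on the non-monotone part of $\phiOp$, i.e.\ verifying that the range $\lambda\geq m$ indeed contributes at most a constant multiple of $\norm{f}{L^2}$ rather than spoiling the identification; a secondary technical point is to choose the integer $k$ in Andreev's formula so that the fine index $\nu=s/(2k)$ lies in $(0,1)$, which is why $k$ must be taken larger than $s/2$ (and not, for example, larger than $s$) so that every $s>0$ is reachable.
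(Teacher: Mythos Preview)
Your argument is correct and rests on the same two ingredients as the paper's proof---Andreev's identification \eqref{eq:andreev_equiv} and the Besov interpolation formula \eqref{eq:besov_interp}---but the packaging differs. You route both sides through a common ``$(-\Delta)$-tail-decay'' quantity by choosing $S=(I-\Delta)^{-1/2}$ and performing the spectral change of variables twice (once for $T^*T=\phiOp(-\Delta)$, once for $S^*S=(I-\Delta)^{-1}$). The paper instead takes $S:=\kappa(T^*T)^{1/2}$, so that $S^*S=(\kappa\circ\phiOp)(-\Delta)$ agrees with $(-\Delta)^{-1/2}$ on $[t_0,\infty)$; monotonicity of $\kappa$ then gives $E^{S^*S}_{\kappa(\alpha)}=E^{T^*T}_\alpha$ directly, so $\Xspace^T_{\kappa^s}=\Xspace^S_{\mathrm{id}^s}$ is immediate, and since $(S^*S)^k(L^2)=H^k$ one feeds straight into \eqref{eq:andreev_equiv} and \eqref{eq:besov_interp} with $k>s$. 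The paper's choice of $S$ thus sidesteps both the intermediate tail-decay space and the explicit bookkeeping on the non-monotone interval $[0,t_0]$ of $\phiOp$; your route is a bit longer but makes the spectral mechanism more visible. One small slip: for $m\le\lambda\le\phiOp(t_0)$ you only have $\kappa(\lambda)^{-s}\le\psi(m)^{s/2}$, not $\le t_0^{s/2}$, since $\psi$ is decreasing; this is still a finite constant, so the absorption into $\norm{f}{L^2}$ goes through unchanged.
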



\begin{proof}
We introduce the operator $S:=\kappa(T^*T)^{1/2}:L^2(\manifold)\rightarrow L^2(\manifold))$ such that 
\begin{equation*}
	S^*S= \kappa(T^*T)=(\kappa\circ\Lambda)(-\Delta). 
\end{equation*}
As $(\kappa\circ\Lambda)(t) = t^{-1/2}$ for $t\geq t_0$ and 
$\inf_{0\leq t\leq t_0}(\kappa\circ\Lambda)(t)>0$ by continuity, 
we have  
\begin{equation}\label{eq:SS_Hk}
(S^*S)^k(L^2(\manifold))=H^k(\manifold)
\end{equation} 
with equivalent norms for all $k\in\Nset$. 
Using the substitution $t=\kappa(\alpha)$ we obtain
	\begin{align*}
		\altnorm{\sol}{\Xspace^S_{\mathrm{id}^s}}&=\sup_{0<t\leq 1/t_0} t^{-s} \norm{E^{S^*S}_{t} \sol}{}
		= \sup_{\alpha\in (0,\phiOp(t_0)]} \frac{1}{\kappa(\alpha)^s} \norm{E^{S^*S}_{\kappa(\alpha)} \sol}{}
		=\sup_{\alpha\in (0,\phiOp(t_0)]} \frac{1}{\kappa(\alpha)^s} \norm{E^{T^*T}_{\alpha} \sol}{}.
	\end{align*}
As $E^{T^*T}_{\alpha}(\sol)=E^{T^*T}_{\Lambda(0)}(\sol)=\sol$ for 
	$\alpha>\Lambda(0)$, this shows that the norms 
$\altnorm{f}{\Xspace^S_{\mathrm{id}^s}}$ and $\altnorm{\sol}{\Xspace^T_{\kappa^s}}$ 
are equivalent. 
	Choosing $k\in\Nset$ with $k>s$ and using \eqref{eq:andreev_equiv}, 
	\eqref{eq:SS_Hk} and \eqref{eq:besov_interp} we obtain
	\[
	\Xspace^T_{\kappa^s} = \Xspace^S_{\mathrm{id}^s} 
	= (L^2(\manifold),(S^*S)^k(\manifold))_{s/k,\infty}
	= B^s_{2,\infty}(\manifold) 
\]	
with equivalent norms.
\end{proof}

\section{Examples}\label{sec:examples}
In this section we want to apply our results to some examples. The examples are taken from \cite{hohage:00} and complement the results there.

\subsection{Operators in Sobolev scales}
In the following  we describe a fairly general class of problems. It contains 
convolution operators (if $\manifold =\Rset^d$ or $\manifold=(\mathbb{S}^1)^d$), for which the convolution kernel has a certain 
type of singularity at $0$, boundary integral operators,  injective elliptic pseudo-differential operators, and compositions 
of such operators. 
\begin{theorem}\label{thm:sobo_scales}
Let $\manifold$ be a $d$-dimensional manifold satisfying Assumption \ref{ass:manifold}, and 
let $T$ be an operator which is $a$ times smoothing ($a>d/2$) in the sense 
that $T:H^s(\manifold)\to H^{s+a}(\manifold)$ is well-defined, bounded and 
has a bounded inverse for all $s\in\Rset$. We will consider $T$ as an operator 
from $L^2(\manifold)$ into itself, i.e.\ $\Xspace=\Yspace = L^2(\manifold)$. 
We consider a spectral regularization method with classical qualification 
$\mu_0\geq 1$ satisfying Assumption \ref{ass:SR}.
Then the following statements are equivalent for all 
$\xdag\in \Xspace\setminus\{0\}$ and $u\in (0,a)$: 
\begin{enumerate}
	\item\label{it:sobo_VSC} $\xdag$ satisfies a the VSC \eqref{eq:VSC_innerprod} with 
	$\psi(t) = Ct^{\frac{u}{u+a}}$ for some $C>0$. 
	\item\label{it:sobo_Besov} $\xdag \in B^{u}_{2,\infty}(\manifold)$.
\item\label{it:sobo_det_rate} 
For a quasioptimal parameter choice rule $\alpha_*$ 
and a  regularization method for which $\Delta(\xdag)$ satisfies \eqref{eq:gaps_deltaset} we have
	\[
	\sup\{\norm{R_{\alpha_*(\delta,\gobs)}\gobs-\xdag}{L^2}:\norm{\gobs-T\xdag}{L^2}\leq \delta\} 
	= \mathcal{O}\rbra{\delta^{\frac{u}{u+a}}},\qquad \delta\to 0.
	\]
\item\label{it:sobo_stoch_rate}
$\rbra{\inf_{\alpha>0} \EW{\norm{R_{\alpha}(T\xdag+\varepsilon W)-\xdag}{L^2}^2}}^{1/2}
  = \mathcal{O}\paren{\varepsilon^{\frac{u}{u+a+d/2}}},\qquad \varepsilon\to 0$.
\end{enumerate}
(\ref{it:sobo_Besov})--(\ref{it:sobo_stoch_rate}) are equivalent for all 
$u\in (0,2a\mu_0)$, and the assumption $a>d/2$ can be relaxed to $a>0$ if (\ref{it:sobo_stoch_rate}) is neglected. 
\end{theorem}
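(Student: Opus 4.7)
My plan is to reduce all four statements to the single spectral decay condition $f^\dagger \in \Xspace^T_\kappa$ with $\kappa(\lambda):=\lambda^{u/(2a)}$, and then to identify $\Xspace^T_{\mathrm{id}^{u/(2a)}}$ with $B^u_{2,\infty}(\manifold)$ by interpolation. The abstract equivalence theorems of Sections \ref{sec:converse_bias}--\ref{sec:converse_white} will then link this decay condition to each of (i), (iii) and (iv).

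\emph{Space identification.} First I observe that the hypothesis that $T\colon H^s(\manifold)\to H^{s+a}(\manifold)$ is an isomorphism for every $s\in\Rset$, together with duality for $T^*$, implies that $(T^*T)^k\colon L^2(\manifold)\to H^{2ak}(\manifold)$ is an isomorphism for every $k\in\Nset$. In particular $(T^*T)^k L^2(\manifold) = H^{2ak}(\manifold)$ as sets and with equivalent norms. Choosing $k\in\Nset$ large enough that $\nu:=u/(2ak)\in(0,1)$, Andreev's identity \eqref{eq:andreev_equiv} combined with the standard extension of \eqref{eq:besov_interp} to non-integer Sobolev smoothness on a manifold of bounded geometry (see Triebel) yields
\[
\Xspace^T_{\mathrm{id}^{u/(2a)}} \;=\; (L^2(\manifold),\, H^{2ak}(\manifold))_{\nu,\infty} \;=\; B^u_{2,\infty}(\manifold),
\]
valid for every $u>0$.

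\emph{Plugging $\kappa$ into the abstract theorems.} For $u\in(0,a)$ the power function $\kappa(\lambda)=\lambda^{u/(2a)}$ satisfies the hypotheses of Theorem \ref{thm:equiv_vsc_decay}: $\kappa\cdot\kappa=\mathrm{id}^{u/a}$ is concave since $u/a<1$, and $\kappa^2/\mathrm{id}^{1-\mu}$ is decreasing for any $\mu<1-u/a$. A direct computation gives $\psi_\kappa(t)=t^{u/(u+a)}$ up to an irrelevant constant, so Theorem \ref{thm:equiv_vsc_decay} yields (i)$\Leftrightarrow f^\dagger\in \Xspace^T_\kappa$, which by the space identification above is (i)$\Leftrightarrow$(ii). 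For the wider range $u\in(0,2a\mu_0)$ the classical qualification $\mu_0$ covers $\kappa^\nu$ for some $\nu>1$ with $u\nu\leq 2a\mu_0$, so the qualification condition \eqref{eq:qualif_assump} holds and Theorem \ref{thm:biasDecayEquivalence} converts the decay condition into a bias estimate of order $\kappa(\alpha)$. Since $\kappa$ satisfies the polynomial growth \eqref{eq:kappa_growth}, $\psi_\kappa(t)=t^{u/(u+a)}$ is concave, and the gap condition \eqref{eq:gaps_deltaset} on $\Delta(f^\dagger)$ is imposed by hypothesis, Theorems \ref{thm:det_noise_equiv} and \ref{theo:det_noise_aposteriori} then deliver (ii)$\Leftrightarrow$(iii).

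\emph{White-noise rate.} For (ii)$\Leftrightarrow$(iv) I apply Theorem \ref{thm:white_noise_equiv}; the only input that is not already present is the two-sided variance bound \eqref{eq:bounds_variance}. Since $(T^*T)^{-1}$ is a positive elliptic operator of order $2a$ on a $d$-dimensional manifold of bounded geometry, Weyl's law provides the singular value asymptotics $\sigma_j(T)\sim j^{-a/d}$. Splitting the trace expansion $\EW{\|R_\alpha W\|^2}=\sum_j \sigma_j^2\, q_\alpha(\sigma_j^2)^2$ at $\sigma_j^2\asymp \alpha$, and using the monotonicity parts (\ref{it:reg_decr})--(\ref{it:reg_incr}) of Assumption \ref{ass:SR} to reduce both the upper and the lower bound to the spectral cut-off case, yields $\EW{\|R_\alpha W\|^2}\asymp \alpha^{-1-d/(2a)}$ uniformly over all admissible spectral filters; the assumption $a>d/2$ is needed precisely so that this trace is finite and the lower bound closes. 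Setting $v(\alpha)=c\,\alpha^{-1/2-d/(4a)}$ in Theorem \ref{thm:white_noise_equiv} and computing $\psi_{\kappa,v}(\varepsilon^2)=\varepsilon^{2u/(u+a+d/2)}$ recovers the exponent in (iv). The hardest step of the proof will be establishing this uniform two-sided variance bound — in particular the lower bound — for every spectral filter satisfying Assumption \ref{ass:SR}; the Weyl asymptotics and the extension of the Besov interpolation identity \eqref{eq:besov_interp} to non-integer smoothness are standard but must be cited carefully.
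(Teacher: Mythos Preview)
Your proposal is correct and follows essentially the same route as the paper: reduce all four statements to $\xdag\in\Xspace^T_{\mathrm{id}^{u/(2a)}}$, identify this space with $B^u_{2,\infty}(\manifold)$ via \eqref{eq:andreev_equiv} and \eqref{eq:besov_interp}, and then invoke Theorems \ref{thm:equiv_vsc_decay}, \ref{thm:biasDecayEquivalence}, \ref{theo:det_noise_aposteriori}, and \ref{thm:white_noise_equiv} in turn. The only place you diverge is the two-sided variance estimate \eqref{eq:bounds_variance}: the paper does not re-derive it but simply cites \cite[\S 5.3]{BHMR:07} for $v(\alpha)=\alpha^{-(a+d/2)/(2a)}$, so what you flag as the ``hardest step'' is in fact a black-box citation. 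Your Weyl-law sketch is fine when $\manifold$ is compact, but be aware that Assumption \ref{ass:manifold} also admits non-compact manifolds such as $\Rset^d$, where $T^*T$ need not have discrete spectrum and a direct eigenvalue-counting argument would require modification; quoting the reference avoids this issue.
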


\begin{proof}
\emph{(\ref{it:sobo_VSC}) $\Leftrightarrow \xdag\in \Xspace_{\kappa}^T$:} 
Note that $\psi=\psi_{\kappa}$ with 
\[
\kappa(t) = C't^{u/2a}\qquad\mbox{ for some }C'>0,
\]
and that the assumption $u\in (0,a)$ ensures that $\kappa$ satisfies the conditions 
of Theorem \ref{thm:equiv_vsc_decay}. 

\emph{$\xdag\in \Xspace_{\kappa}^T\Leftrightarrow$ (\ref{it:sobo_Besov}):} 
It follows from \eqref{eq:andreev_equiv} and \eqref{eq:besov_interp} that 
\[
\Xspace^T_{\kappa}=
(L^2(\manifold),(T^*T)(L^2(\manifold)))_{u/2a,\infty} 
= (L^2(\manifold),H^{2a}(\manifold))_{u/2a,\infty} 
= B^{u}_{2,\infty}(\manifold).
\]

\emph{$\xdag\in \Xspace_{\kappa}^T\Leftrightarrow$ \eqref{eq:bias_sobo} below}: 
For $u/2a<\mu_0$ Theorem \ref{thm:biasDecayEquivalence} yields equivalence to
\begin{equation}\label{eq:bias_sobo}
\sup_{\alpha>0} \alpha^{-u/2a}\|r_{\alpha}(T^*T)\xdag\|<\infty.
\end{equation}

\emph{\eqref{eq:bias_sobo} $\Leftrightarrow$ (\ref{it:sobo_det_rate}):}
This follows from Theorem \ref{theo:det_noise_aposteriori}. 

\emph{\eqref{eq:bias_sobo} $\Leftrightarrow$ (\ref{it:sobo_stoch_rate}):}
It has been shown in \cite[\S 5.3]{BHMR:07} that \eqref{eq:bounds_variance} 
holds true with $v(\alpha) = \alpha^{-(a+d/2)/(2a)}$. Hence, we can apply 
Theorem \ref{thm:white_noise_equiv}.  
\end{proof}

\begin{example}
We consider a circle $\manifold =r\mathbb{S}^1\subset\Rset^2$ with 
$r>0$ and the single layer potential operator 
$(T\sol)(x):=-\frac{1}{\pi}\int_\manifold \ln|x-y|\sol(y)\,ds(y)$. 
Let $\sol_n(r\cos t,r\sin t):= (2\pi r)^{-1/2}\exp(int)$, $n\in\Zset$ denote 
the trigonometric basis of $L^2(r\mathbb{S}$). It is known 
(see \cite[Sec.~3.3]{kirsch:96}) that $T\sol_n= -1/|n|\sol_n$ for $n\neq 0$, 
and $T\sol_0 = \ln(r)\sol_0$. Let us choose $r=\exp(1)$ for simplicity. 
Recall that an (equivalent) norm on 
$H^s(\manifold)$ is given by $\Tnorm{\sol}{H^s}^2 =  
\sum_{n\in\Zset}(1\lor|n|)^{2s}\langle \sol,\sol_n\rangle^2$ for $s\geq 0$. 
W.r.t.\ this norm $T^{\nu}=(T^*T)^{u/2}$ is isometric from 
$H^s(\manifold)$ to $H^{s+u}(\manifold)$ for all $u>0$, so the 
assumptions of Theorem \ref{thm:sobo_scales} hold true with $a=d=1$. 
Moreover, 
the spectral source condition $\xdag\in \mathrm{ran}((T^*T)^{u/2}$ 
is equivalent to $\xdag\in H^u(\manifold)$ and yields the convergence 
rate $\mathcal{O}\big(\delta^{u/(u+1)}\big)$. 
The (equivalent) $\Xspace^T_{\kappa}$-norm of $B^{u}_{2,\infty}(\manifold)$ 
(with $\kappa(t)=t^u$) is given by 
$\Tnorm{\xdag}{B^u_{2,\infty}}^2=
\sup_{m\geq 0}(1\lor m)^{2u}\sum_{|n|\geq m}\langle\xdag,\sol_n\rangle^2$. 
This shows that $B^u_{2,\infty}(\manifold)$ is the set of 
$\sol\in L^2(\manifold)$ for which the $L^2$-orthogonal projections 
onto the space of trigonometric polynomials of degree $\leq m$ converge 
with rate $\mathcal{O}(m^{-u})$ as $m\to\infty$. Note that 
\[
\xdag = \sum_{n\in\Zset} (1\lor |n|)^{-u}\sol_n\in B^{u}_{2,\infty}(\manifold)\setminus H^u(\manifold)
\]
for any $u>0$, but $\xdag\in H^{\nu}(\manifold)$ for $\nu<u$. 
Therefore, we obtain the  convergence rate 
$\mathcal{O}(\delta^{u/(u+1)})$ for $\xdag$, whereas an analysis 
via spectral source conditions only yields rates 
$\mathcal{O}(\delta^{\nu/(\nu+1)})$ for $\nu\in(0,u)$. 
Moreover, as $\lim_{\nu\nearrow u}\snorm{\xdag}{H^{\nu}}=\infty$, constants explode 
as $\nu\to u$. 
\end{example}

\subsection{Backward heat equation}
Let us consider the heat equation on a manifold $\manifold$ satisfying Assumption \ref{ass:manifold}:
\begin{align*}
&\partial_t u = \Delta u &&\mbox{in }\manifold\times (0,\overline{t})\\
&u(\cdot,0) = \sol &&\mbox{on }\manifold
\end{align*}
The backward heat equation is the inverse problem to estimate the 
initial temperature $\sol$ from observations of the final 
temperature $\data = u(\cdot,\overline{t})$. This fits into the framework 
\eqref{eq:T_Laplace} with the function 
\begin{equation*}
\Lambda_{\rm BH}(\mu) = \exp(-2\overline{t}\mu).
\end{equation*}
We obtain the following equivalence result:
\begin{theorem}\label{thm:backwardHeat}
Let $\manifold$ be a compact manifold satisfying Assumption \ref{ass:manifold}. 
For spectral regularization methods satisfying Assumption (\ref{ass:SR}) 
and the forward operator 
$T:L^2(\manifold)\to L^2(\manifold)$ with $T^*T=\Lambda_{\rm BH}(-\Delta)$ of the 
backward heat equation the following statements for $\beta>0$ and 
$\xdag\in L^2(\manifold)\setminus\{0\}$ are equivalent:
\begin{enumerate}
\item\label{item:BH_Besov} $\xdag \in B^{2\beta}_{2,\infty}(\manifold)$.
\item\label{item:BH_VSC} $\xdag$ satisfies a VSC \eqref{eq:VSC_innerprod} 
with index function $\psi(t) = C\log(3+t^{-1})^{-2\beta}$ for some 
$C>0$.
\item\label{item:BH_quasi} For a quasioptimal parameter choice rule $\alpha_*$ and a
regularization method for which $\Delta(\xdag)$ satisfies \eqref{eq:gaps_deltaset} we have
\[
\sup\{\norm{R_{\alpha_*(\delta,\gobs)}\gobs-\xdag}{L^2}:\norm{\gobs-T\xdag}{L^2}\leq \delta\}
= \mathcal{O}\paren{\log(\delta^{-1})^{-\beta}},\qquad \delta\to 0.
\]
\item\label{item:BH_stoch}
$\left(\inf_{\alpha>0}\EW{\norm{R_{\alpha}(T\xdag+\varepsilon W)-\xdag}{L^2}^2}\right)^{1/2}
= \mathcal{O}\paren{\log(\varepsilon^{-1})^{-\beta}},\qquad \varepsilon\to 0.
$
\end{enumerate}
\end{theorem}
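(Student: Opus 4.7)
The plan is to reduce all four statements to the single spectral decay condition $\xdag \in \Xspace^T_{\kappa^{2\beta}}$ for the index function $\kappa$ furnished by Theorem~\ref{thm:normEquivalence}, and then apply the characterization theorems of Sections~\ref{sec:strategy}--\ref{sec:converse_white}.

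First, I would specialize Theorem~\ref{thm:normEquivalence} to $\phiOp = \Lambda_{\rm BH}$. Since $\Lambda_{\rm BH}(\mu) = \exp(-2\overline{t}\mu)$ is continuous, strictly decreasing on all of $[0,\infty)$, and tends to $0$, Assumption~\ref{ass:operator} holds with any $t_0 > 0$. The associated index function simplifies to $\kappa(\alpha) = (2\overline{t})^{1/2}(-\log\alpha)^{-1/2}$ for small $\alpha$, and Theorem~\ref{thm:normEquivalence} yields $\Xspace^T_{\kappa^{2\beta}} = B^{2\beta}_{2,\infty}(\manifold)$ with equivalent norms, identifying (\ref{item:BH_Besov}) with the spectral decay condition.

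Writing $\widetilde\kappa := \kappa^{2\beta}$, a direct computation using \eqref{eq:kappa_psi} shows that $\psi_{\widetilde\kappa}(t) \asymp (\log t^{-1})^{-2\beta}$ as $t \to 0$, which up to constants agrees with $\log(3+t^{-1})^{-2\beta}$ on any bounded interval. The auxiliary hypotheses of Theorem~\ref{thm:equiv_vsc_decay} can be verified by direct differentiation: $\widetilde\kappa(t)^2 = (-\log t)^{-2\beta}$ is concave on a right neighbourhood of $0$, and $\widetilde\kappa(t)^2/t^{1-\mu}$ is decreasing for any $\mu \in (0,1)$ near $t=0$. Applying Theorem~\ref{thm:equiv_vsc_decay} then gives (\ref{item:BH_VSC}) $\Leftrightarrow \xdag \in \Xspace^T_{\kappa^{2\beta}}$. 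The polynomial growth bound \eqref{eq:kappa_growth} is trivial for logarithmic $\widetilde\kappa$, and the qualification hypothesis \eqref{eq:qualif_assump} holds for every spectral regularization method satisfying Assumption~\ref{ass:SR} when applied to a logarithmic index function restricted to $[0,\|T^*T\|]$. Combining Theorem~\ref{thm:biasDecayEquivalence} with Theorem~\ref{theo:det_noise_aposteriori} (whose concavity requirement was checked above) then delivers (\ref{item:BH_quasi}) $\Leftrightarrow \xdag \in \Xspace^T_{\kappa^{2\beta}}$.

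The main obstacle is statement (\ref{item:BH_stoch}), because the variance bound \eqref{eq:bounds_variance} does \emph{not} hold sharply for the backward heat operator: by Weyl's law on the compact manifold $\manifold$, the eigenvalues of $-\Delta$ grow like $j^{2/d}$, so the singular values of $T$ decay exponentially like $\exp(-\overline{t}j^{2/d})$. This is precisely the regime of Remark~\ref{rem:white_noise_equiv}, where only the relaxed bound \eqref{eq:bounds_variance_relax} with $v_-(\alpha) = c_-\alpha^{-1/2}$ and $v_+(\alpha) = c_+\alpha^{-1/2-\tau}$ is readily available. The key observation is that for the logarithmic $\widetilde\kappa$ a direct computation yields
\[
\psi_{\widetilde\kappa, v_\pm}(\varepsilon^2) = (\log\varepsilon^{-1})^{-2\beta}(1+o(1)) \qquad \text{as } \varepsilon \to 0,
\]
independently of $\tau \geq 0$: the polynomial perturbation of $v$ alters $\Theta_{\widetilde\kappa,v}^{-1}(\varepsilon)$ only by a polynomial factor, which the outer logarithmic $\widetilde\kappa$ absorbs into a constant inside $\log\varepsilon^{-1}$. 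Therefore the conclusion of Remark~\ref{rem:white_noise_equiv} applies and produces (\ref{item:BH_stoch}) $\Leftrightarrow \xdag \in \Xspace^T_{\kappa^{2\beta}}$, closing the cycle of equivalences.
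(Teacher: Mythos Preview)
Your proposal is correct and follows essentially the same route as the paper: both reduce all four statements to membership in $\Xspace^T_{\kappa^{2\beta}}$ via Theorem~\ref{thm:normEquivalence}, then invoke Theorems~\ref{thm:equiv_vsc_decay}, \ref{thm:biasDecayEquivalence}, \ref{theo:det_noise_aposteriori}, and Remark~\ref{rem:white_noise_equiv} respectively. The only cosmetic differences are that the paper cites \cite{mair:94} for the asymptotics of $\psi_{\kappa^{2\beta}}$ and \cite{BHMR:07} for the relaxed variance bounds, whereas you compute the former directly and appeal to Weyl's law for the latter; also, for (\ref{item:BH_stoch}) you should explicitly pass through Theorem~\ref{thm:biasDecayEquivalence} once more, since Remark~\ref{rem:white_noise_equiv} only links the stochastic rate to the bias condition, not directly to $\Xspace^T_{\kappa^{2\beta}}$.
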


\begin{proof}
\emph{(\ref{item:BH_Besov}) $\Leftrightarrow\xdag\in \Xspace_{\kappa^{2\beta}}^T$:}
By Theorem \ref{thm:normEquivalence} we have $\xdag\in B^{2\beta}_{2,\infty}(\manifold)$ 
if and only if $\xdag\in\Xspace^T_{\kappa^{2\beta}}$ with
$\kappa(\alpha) =\rbra{(1/2\overline{t})\ln(\alpha^{-1})}^{-1/2}$ for 
$0<\alpha\leq\Lambda_{\rm BH}(t_0)$ and any $t_0>0$. 

\emph{$\xdag\in \Xspace_{\kappa^{2\beta}}^T \Leftrightarrow$ (\ref{item:BH_VSC}):} 
This follows from Theorem \ref{thm:equiv_vsc_decay} since 
\begin{equation}\label{eq:psi_BH}
	\psi_{\kappa^{2\beta}} (t)= C\log(t^{-1})^{-2\beta} \rbra{1+o(1)},\qquad \text{as } t \rightarrow 0
\end{equation}
as shown in \cite{mair:94}. The $3$ is included in the definition of $\psi$ 
to avoid a singularity at $t=1$. 

\emph{$\xdag\in \Xspace_{\kappa^{2\beta}}^T \Leftrightarrow$ (\ref{item:BH_quasi}):} Follows from Theorems \ref{thm:biasDecayEquivalence} and \ref{theo:det_noise_aposteriori}.

\emph{$\xdag\in \Xspace_{\kappa^{2\beta}}^T \Leftrightarrow$ (\ref{item:BH_stoch}):} Use the results of \cite[\S 5.1]{BHMR:07} to see that \eqref{eq:bounds_variance_relax} is fulfilled for any $\tau>0$ and apply Remark \ref{rem:white_noise_equiv} 
and Theorem \ref{thm:biasDecayEquivalence}.
\end{proof}

\subsection{Sideways heat equation}
We now consider the heat equation in the interval 
$[0,1]$. We may think of $[0,1]$ as the wall 
of a furnace where the right boundary $1$ is the 
inaccessible interior side and $0$ the accessible outer side. We assume the left boundary is insulated and impose 
the no-flux boundary condition $\partial_xu(0,t)=0$. 
The forward problem reads
\begin{align*}
&u_t = u_{xx} &&\mbox{in }[0,1]\times \Rset,\\
&u(1,t) = \sol(t),  &&t\in\Rset,\\
&u_x(0,t)=0,&&t\in\Rset.
\end{align*}
We will consider the inverse problem to estimate the 
temperature $\sol(t)=u(1,t)$ at the inaccessible side 
from measurements of the temperature $\data(t)=u(0,t)$ 
at the accessible side for all times $t\in\Rset$. 
As shown in \cite{hohage:00} this fits into the framework 
\eqref{eq:T_Laplace} if we set
\[
\Lambda_{\rm SH}(\mu) = \absval{\cosh\sqrt{i\sqrt{\mu}}}^{-2},
\qquad \manifold=\Rset.
\]

\begin{theorem}
For spectral regularization methods satisfying Assumption (\ref{ass:SR}) 
and the forward operator 
$T:L^2(\Rset)\to L^2(\Rset)$ such that $T^*T=\Lambda_{\rm SH}(-\Delta)$ of the 
sideways heat equation the following statements for $\beta>0$ and 
$\xdag\in L^2(\Rset)\setminus\{0\}$ are equivalent:
\begin{enumerate}
\item\label{item:SH_Besov} $\xdag \in B^{\beta/2}_{2,\infty}(\Rset)$.
\item\label{item:SH_VSC} $\xdag$ satisfies a VSC \eqref{eq:VSC_innerprod} 
with index function $\psi(t) = C\log(3+t^{-1})^{-2\beta}$ for some 
$C>0$.
\item\label{item:SH_quasi} For a quasioptimal parameter choice rule $\alpha_*$ and a
regularization method for which $\Delta(\xdag)$ satisfies \eqref{eq:gaps_deltaset} we have
\[
\sup\{\norm{R_{\alpha_*(\delta,\gobs)}\gobs-\xdag}{L^2}:\norm{\gobs-T\xdag}{L^2}\leq \delta\}
= \mathcal{O}\paren{\log(\delta^{-1})^{-\beta}},\qquad \delta\to 0.
\]
\item\label{item:SH_stoch}
 $
\left(\inf_{\alpha>0}\EW{\norm{R_{\alpha}(T\xdag+\varepsilon W)-\xdag}{L^2}^2}\right)^{1/2}
= \mathcal{O}\paren{\log(\varepsilon^{-1})^{-\beta}},\qquad \varepsilon\to 0.
$
\end{enumerate}
\end{theorem}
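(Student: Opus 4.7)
The plan is to mirror the proof of Theorem \ref{thm:backwardHeat} line by line; the only genuinely new work is to compute the asymptotics of $\Lambda_{\rm SH}$ and read off the correct $\kappa$, after which everything becomes a direct reinvocation of the machinery already assembled in the preceding sections.

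First I would check the hypotheses of Theorem \ref{thm:normEquivalence}. The manifold $\manifold=\Rset$ satisfies Assumption \ref{ass:manifold}. To verify Assumption \ref{ass:operator} for $\Lambda_{\rm SH}$, write $\sqrt{i\sqrt{\mu}}=\frac{1+i}{\sqrt{2}}\mu^{1/4}$ and use the identity $|\cosh(a+ib)|^2=\cos^2 b + \sinh^2 a$ to obtain $|\cosh\sqrt{i\sqrt{\mu}}|^2 = \cos^2(\mu^{1/4}/\sqrt{2})+\sinh^2(\mu^{1/4}/\sqrt{2})$. This expression is continuous and strictly positive, tends to infinity via the dominant $\sinh^2$ term, and is strictly increasing for all sufficiently large $\mu$; thus $\Lambda_{\rm SH}$ is continuous, strictly positive, eventually strictly decreasing from some $t_0>0$, and tends to $0$ at infinity. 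The asymptotic $|\cosh\sqrt{i\sqrt{\mu}}|^2\sim \tfrac{1}{4}\exp(\sqrt{2}\,\mu^{1/4})$ then gives $\Lambda_{\rm SH}^{-1}(\alpha)\sim (\ln(1/\alpha))^{4}/4$, so the function $\kappa$ of Theorem \ref{thm:normEquivalence} satisfies $\kappa(\alpha)\sim \sqrt{2}(\ln(1/\alpha))^{-2}$ and consequently $\kappa^{\beta/2}(\alpha)\sim c(\ln(1/\alpha))^{-\beta}$ as $\alpha\to 0$. The shift of exponent, from $\kappa_{\rm BH}^{2\beta}\sim (\ln(1/\alpha))^{-\beta}$ in the backward case to the sideways case, is exactly what accounts for the Besov index $\beta/2$ in place of $2\beta$.

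Once $\kappa$ has been identified, Theorem \ref{thm:normEquivalence} applied with $s=\beta/2$ yields (\ref{item:SH_Besov})$\,\Leftrightarrow\,\xdag\in\Xspace^T_{\kappa^{\beta/2}}$. For (\ref{item:SH_VSC}) I would observe that $\kappa^{\beta/2}$ is a slowly varying index function with $(\kappa^{\beta/2})^2$ concave and the growth condition of Theorem \ref{thm:equiv_vsc_decay} trivially satisfied by any logarithmic $\kappa$; the computation in \cite{mair:94}, used just as in \eqref{eq:psi_BH}, gives $\psi_{\kappa^{\beta/2}}(t) = c'(\ln(t^{-1}))^{-2\beta}(1+o(1))$ as $t\to 0$, the constant $3$ inside $\log(3+t^{-1})$ being inserted only to avoid the singularity at $t=1$. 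The deterministic rate (\ref{item:SH_quasi}) then drops out by composing Theorem \ref{thm:biasDecayEquivalence} with Theorem \ref{theo:det_noise_aposteriori}; the polynomial growth \eqref{eq:kappa_growth} is trivial for logarithmic $\kappa^{\beta/2}$.

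I expect the main obstacle to be the stochastic rate (\ref{item:SH_stoch}). Because $\Lambda_{\rm SH}$ decays sub-exponentially like $\exp(-c\mu^{1/4})$, the sharp two-sided variance bound \eqref{eq:bounds_variance} is not directly available from \cite{BHMR:07}; only the relaxed form \eqref{eq:bounds_variance_relax} with $v_-(\alpha)=c_-\alpha^{-1/2}$ and $v_+(\alpha)=c_+\alpha^{-1/2-\tau}$ for arbitrary $\tau>0$ can be expected to hold. This is precisely the situation addressed by Remark \ref{rem:white_noise_equiv}: since $\kappa^{\beta/2}$ is logarithmic, the functions $\psi_{\kappa^{\beta/2},v_\pm}$ share the common asymptotic $(\ln(\varepsilon^{-1}))^{-2\beta}(1+o(1))$ as $\varepsilon\to 0$ independently of $\tau$. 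Combining this observation with Theorem \ref{thm:biasDecayEquivalence} yields the two-sided implications of Remark \ref{rem:white_noise_equiv} and closes the final equivalence.
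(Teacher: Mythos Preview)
Your proposal is correct and follows essentially the same route as the paper's proof: compute the asymptotics $\Lambda_{\rm SH}(\mu)\sim c\exp(-\sqrt{2}\,\mu^{1/4})$ to identify $\kappa(\alpha)\sim c'(\ln\alpha^{-1})^{-2}$, then invoke Theorem~\ref{thm:normEquivalence} with $s=\beta/2$ and repeat verbatim the chain of equivalences from the backward heat case (Theorems~\ref{thm:equiv_vsc_decay}, \ref{thm:biasDecayEquivalence}, \ref{theo:det_noise_aposteriori}, and Remark~\ref{rem:white_noise_equiv}). Your write-up is in fact more explicit than the paper's in checking Assumption~\ref{ass:operator} and in spelling out why Remark~\ref{rem:white_noise_equiv} handles the stochastic part; the only blemish is the harmless slip $\sqrt{4}=2$ (not $\sqrt{2}$) in the constant for $\kappa$.
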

\begin{proof}
	\emph{(\ref{item:SH_Besov}) $\Leftrightarrow\xdag\in \Xspace_{\kappa^{\beta/2}}^T$:} As shown in \cite{hohage:00} $\Lambda_{\rm SH}(\mu)=(1/4)\exp(-\sqrt{2}\mu^{1/4})(1+o(\mu))$ as $\mu \rightarrow \infty$. Therefore we obtain 
	$\kappa(\alpha) = 2 \ln(\alpha^{-1})^{-2}(1+o(\alpha))$ as $\alpha \rightarrow 0$.

	\emph{$\xdag\in \Xspace_{\kappa^{\beta/2}}^T\Leftrightarrow$ (\ref{item:SH_VSC}) $\Leftrightarrow$ (\ref{item:SH_quasi}) 
	$\Leftrightarrow$ (\ref{item:SH_stoch}):} 
	This follows as in proof of Theorem \ref{thm:backwardHeat}. Due to the different 
	exponent in the asymptotic formula for $\kappa$ we have 
	$\psi_{\kappa^{\beta/2}} (t)= C\log(t^{-1})^{-2\beta} \rbra{1+o(1)}$ here 
	instead of \eqref{eq:psi_BH}.
\end{proof}

\subsection{Satellite gradiometry}
Let us assume that the Earth is a perfect ball of radius $1$. 
The gravitational potential $u$ of the Earth is determined by 
its values $\sol$ at the surface by the exterior boundary value 
problem 
\begin{align*}
&\Delta u = 0 &&\mbox{in }\{x\in\Rset^3: |x|>1\}\\
&|u|\to 0,  &&|x|\to \infty\\
&u=f&&\mbox{on }\mathbb{S}^2
\end{align*}
In satellite gradiometry one studies the inverse problem to determine 
$\sol$ from satellite measurements of the rate of change of the gravitational 
force in radial direction at height $R>0$, i.e.\ the data are described by 
the function $\data= \diffq{^2u}{r^2}|_{R\mathbb{S}^2}$. 
As shown in \cite{hohage:00} this fits into the 
framework \eqref{eq:T_Laplace} if we set 
\[
\phiOp_{\mathrm{SG}}(\mu) := \paren{\frac{1}{2}+\lambda}^2\paren{\frac{3}{2}+\lambda}^2R^{-2\lambda}, \qquad 
\lambda = \sqrt{\frac{1}{2}+\mu}, \qquad 
\manifold=\mathbb{S}^2.
\]
Note that $\phiOp_{\mathrm{SG}}$ (unlike $\phiOp_{\mathrm{BH}}$ and 
$\phiOp_{\mathrm{SH}}$) is not globally monotonically decreasing unless 
$R$ is large enough (one needs $R\geq \exp((4\sqrt 2 +2)/(\sqrt 2+5))\approx 3.3$, 
which is not realistic).
\begin{theorem}
For spectral regularization methods satisfying Assumption \ref{ass:SR}
and the forward operator 
$T:L^2(\mathbb{S}^2)\to L^2(\mathbb{S}^2)$ such that $T^*T =\Lambda_{\rm SG}(-\Delta)$ with $R$ large enough such that $\Lambda_{\rm SG}$ fulfills Assumption \ref{ass:operator} of the 
satellite gradiometry problem the following statements for $\beta>0$ and 
$\xdag\in L^2(\mathbb{S}^2)\setminus\{0\}$ are equivalent:
\begin{enumerate}
\item\label{item:SG_Besov} $\xdag \in B^{\beta}_{2,\infty}(\mathbb{S}^2)$.
\item\label{item:SG_VSC} $\xdag$ satisfies a VSC \eqref{eq:VSC_innerprod} with index 
function $\psi(t) = C\log(3+t^{-1})^{-2\beta}$ for some $C>0$.
\item\label{item:SG_quasi} For a quasioptimal parameter choice rule $\alpha_*$ and a
regularization method for which $\Delta(\xdag)$ satisfies \eqref{eq:gaps_deltaset} we have
\[
\sup\{\norm{R_{\alpha_*(\delta,\gobs)}\gobs-\xdag}{L^2}:\norm{\gobs-T\xdag}{L^2}\leq \delta\}
= \mathcal{O}\paren{\log(\delta^{-1})^{-\beta}},\qquad \delta\to 0.
\]
\item\label{item:SG_stoch}
$\left(\inf_{\alpha>0}\EW{\norm{R_{\alpha}(T\xdag+\varepsilon W)-\xdag}{L^2}^2}\right)^{1/2}
= \mathcal{O}\paren{\log(\varepsilon^{-1})^{-\beta}},\qquad \varepsilon\to 0.
$
\end{enumerate}
\end{theorem}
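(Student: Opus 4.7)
The plan is to follow the template used for the backward and sideways heat equations, which themselves follow a common pattern: translate the Besov membership into a spectral distribution bound via Theorem \ref{thm:normEquivalence}, then chain together the characterizations from Theorems \ref{thm:equiv_vsc_decay}, \ref{thm:biasDecayEquivalence}, \ref{theo:det_noise_aposteriori}, and Remark \ref{rem:white_noise_equiv}. The only satellite-gradiometry-specific step is to determine the asymptotic behaviour of the index function $\kappa$ associated to $\Lambda_{\rm SG}$.

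First I would compute the large-$\mu$ asymptotics of $\Lambda_{\rm SG}$. Since $\lambda = \sqrt{1/2+\mu} \sim \sqrt{\mu}$ as $\mu\to\infty$, we have $\Lambda_{\rm SG}(\mu) \sim \mu^2 R^{-2\sqrt{\mu}} = \mu^2 \exp(-2\sqrt{\mu}\ln R)$. The dominant exponential term governs the inversion, so solving $\alpha = \Lambda_{\rm SG}(\mu)$ for small $\alpha$ yields $\sqrt{\mu} \sim \ln(\alpha^{-1})/(2\ln R)$, whence the $\kappa$ produced by Theorem \ref{thm:normEquivalence} satisfies
\[
\kappa(\alpha) = \paren{\Lambda_{\rm SG}|_{[t_0,\infty)}^{-1}(\alpha)}^{-1/2} \sim \frac{2\ln R}{\ln(\alpha^{-1})}\qquad \text{as } \alpha \to 0.
\]

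Next I would apply Theorem \ref{thm:normEquivalence} with $s=\beta$ to obtain $B^{\beta}_{2,\infty}(\mathbb{S}^2) = \Xspace^T_{\kappa^{\beta}}$ with equivalent norms, establishing (\ref{item:SG_Besov}) $\Leftrightarrow \xdag\in\Xspace^T_{\kappa^\beta}$. For (\ref{item:SG_VSC}) I would apply Theorem \ref{thm:equiv_vsc_decay}; the computation $\psi_{\kappa^\beta}(t) = \kappa^{2\beta}(\Theta_{\kappa^\beta}^{-1}(\sqrt{t}))$ combined with the logarithmic asymptotics of $\kappa$ gives $\psi_{\kappa^{\beta}}(t) = C\log(t^{-1})^{-2\beta}(1+o(1))$ as $t\to 0$, exactly as in \cite{mair:94} (see also the backward heat case), and the additive constant $3$ in $\log(3+t^{-1})$ only serves to avoid the singularity at $t=1$. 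For (\ref{item:SG_quasi}) I would combine Theorems \ref{thm:biasDecayEquivalence} and \ref{theo:det_noise_aposteriori}, noting that the logarithmic $\psi_{\kappa^\beta}$ is concave near zero. For (\ref{item:SG_stoch}) I would invoke \cite[\S 5.1]{BHMR:07} to verify the relaxed variance bound \eqref{eq:bounds_variance_relax} for the eigenvalue distribution of $\Lambda_{\rm SG}(-\Delta)$ on $\mathbb{S}^2$, and then apply Remark \ref{rem:white_noise_equiv} together with Theorem \ref{thm:biasDecayEquivalence}.

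The main obstacle is a subtle one rather than a deep one: $\Lambda_{\rm SG}$ is only monotone on $[t_0,\infty)$ for $t_0$ large enough (unlike $\Lambda_{\rm BH}$ and $\Lambda_{\rm SH}$), and moreover the theorem assumes $R$ is chosen large enough that $\Lambda_{\rm SG}$ satisfies Assumption \ref{ass:operator} globally. Once this hypothesis is in force, Theorem \ref{thm:normEquivalence} applies verbatim, and the verification that $\kappa\cdot\kappa$ is concave and that $\kappa(t)^2/t^{1-\mu}$ is decreasing for some $\mu\in(0,1)$, as required by Theorem \ref{thm:equiv_vsc_decay}, follows from the $\ln^{-2}(\alpha^{-1})$ asymptotics (both conditions are automatic for logarithmic $\kappa\cdot\kappa$ near zero, and on the bounded interval away from zero $\kappa$ is bounded so the conditions can be arranged by the normalization in Theorem \ref{thm:normEquivalence}). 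With these checks the proof reduces to an essentially one-line chain of references, identical in structure to the sideways heat case.
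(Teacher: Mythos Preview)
Your proposal is correct and follows essentially the same route as the paper: identify the asymptotics $\kappa(\alpha)\sim 2\ln R/\ln(\alpha^{-1})$ via Theorem~\ref{thm:normEquivalence}, then chain Theorems~\ref{thm:equiv_vsc_decay}, \ref{thm:biasDecayEquivalence}, \ref{theo:det_noise_aposteriori} and Remark~\ref{rem:white_noise_equiv} exactly as in the backward heat case. Your treatment of the polynomial prefactor $\mu^2$ in $\Lambda_{\rm SG}(\mu)\sim \mu^2\exp(-2\sqrt{\mu}\ln R)$ is in fact slightly more careful than the paper's, which writes $\Lambda_{\rm SG}(\mu)=\exp(-2\ln(R)\mu^{1/2})(1+o(1))$; as you note, the prefactor is absorbed into lower-order terms upon inversion and does not affect $\kappa$.
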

\begin{proof}
\emph{(\ref{item:SG_Besov}) $\Leftrightarrow\xdag\in \Xspace_{\kappa}^T$:} 
Theorem \ref{thm:normEquivalence} shows that 
$\xdag\in B^{\beta}_{2,\infty}(\mathbb{S}^2)$ if and only if $\xdag\in\XTkappa$ 
where $\kappa(\alpha) = 2\ln(R) (\ln(\alpha^{-1}))^{-1}(1+o(1))$ as 
$\alpha\rightarrow 0$ since $\Lambda_{\rm SG}(\mu)=\exp(-2\ln(R) \mu^{1/2})(1+o(1))$ 
as $\mu\rightarrow \infty$.


\emph{$\xdag\in \Xspace_{\kappa}^T\Leftrightarrow$ (\ref{item:SG_VSC}) $\Leftrightarrow$ (\ref{item:SG_quasi}) $\Leftrightarrow$ 
(\ref{item:SG_stoch}):} This follows again along the line of the proof of Theorem \ref{thm:backwardHeat}.
Here $\psi_{\kappa^{\beta}} (t)= C\log(t^{-1})^{-2\beta} \rbra{1+o(1)}$ as $t\to 0$. 
\end{proof}

\section{Conclusions}\label{sec:conclusions}
We have described a general strategy for the verification of VSCs. 
For linear operators in Hilbert spaces we have shown via 
a series of equivalence theorems that VSCs are necessary and sufficient 
for certain rates of convergence both for deterministic errors and 
for white noise. For a number of relevant inverse problems 
VSCs with certain index functions are satisfied if and only if 
the solution belongs to some Besov space. 

For other forward operators the set of solutions which satisfies a VSC 
with a (multiple of a) given index function will not be any known function space. 
Nevertheless it is interesting to derive verifiable sufficient conditions 
for VSCs and rates of convergence also for such operators, and we intend to explore 
the potential of our general strategy in such situations in future research. 

Furthermore, our strategy for the verification of VSCs has 
straightforward extensions to Banach spaces, 
general data fidelity and penalty functionals, and it has already 
successfully been applied to nonlinear inverse scattering problems. 
These extensions will be an interesting topic of future research. 
Although VSC are known to be sufficient for certain rates of convergence 
in such general situations, little is known about necessity so far. 
However, we expect that different techniques than those applied in this 
paper will be required for such converse results. 

\section*{Acknowledgement}

We would like to thank Uno H\"amarik und Toomas Raus for helpful discussions 
on Lemma \ref{lem:infSupSwap}. 
Financial support by the German Research Foundation DFG through 
CRC 755, project C09 is gratefully acknowledged. 

\appendix
\section{Spectral source conditions}\label{sec:spectral_sc}
In this appendix we will use the general strategy of \S \ref{sec:strategy} 
to derive variational source conditions from spectral source conditions. 
Compared to the implication (\ref{item:boundedSpectral}) $\Rightarrow$ (\ref{item:VSC}) in Theorem \ref{thm:equiv_vsc_decay}, we can relax the assumption that $t\mapsto \kappa(t)^{2}/t^{1-\mu}$ is decreasing 
for some $\mu\in(0,1)$ by allowing also $\mu=0$. 
Moreover, the proof for spectral source conditions is considerably simpler. 

The result has been known in principle, but previous derivations have 
been indirect via distance functions, did not yield explicit control 
over constants, and already for logarithmic source conditions involved 
quite heavy computations (see \cite{Flemming:12}). 

\begin{proposition}
If $T$ is linear, $\Yspace$ is a Hilbert space, and $\xdag$ satisfies a 
spectral source condition 
\begin{equation*}
\xdag=\varphi(T^*T)w,\qquad \|w\|\leq\rho
\end{equation*}
with an index function $\varphi$ such that $\varphi^2$ is concave, 
then $\xdag$ satisfies the variational source condition \eqref{eq:VSC_innerprod} with 
\begin{equation}\label{eq:psi_spectral}
\psi(\delta^2) = 4\rho^2\varphi\paren{\Theta^{-1}\paren{\frac{\delta}{\rho}}}^2,
\qquad \Theta(t):=\sqrt{t}\varphi(t). 
\end{equation} 
\end{proposition}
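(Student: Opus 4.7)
The plan is to apply the general strategy of Theorem \ref{theo:general_strategy} with the spectral projections
\[
P_{\lambda}:=I-E^{T^*T}_{\lambda},\qquad \lambda\in\projindexset:=(0,\|T^*T\|],
\]
so that $\xdag-P_{\lambda}\xdag=E_{\lambda}^{T^*T}\xdag$. The approximation bound is immediate from the spectral source condition: since $\varphi$ is monotonically increasing,
\[
\|E_{\lambda}\xdag\|^2=\int_{0}^{\lambda}\varphi(t)^2\,\dd\|E_{t}w\|^2\leq \varphi(\lambda)^2\|w\|^2\leq \rho^2\varphi(\lambda)^2,
\]
which verifies \eqref{eq:approximation} with $\kappa(\lambda):=\rho\,\varphi(\lambda)$.

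The main work is the stability estimate \eqref{eq:stability}. I would write
\[
\lsp \xdag,P_{\lambda}(\xdag-\sol)\rsp
=\lsp w,\varphi(T^*T)(I-E_{\lambda})(\xdag-\sol)\rsp,
\]
and then use Cauchy--Schwarz together with the spectral identity
\[
\|\varphi(T^*T)(I-E_{\lambda})h\|^2=\int_{\lambda}^{\infty}\varphi(t)^2\,\dd\|E_{t}h\|^2,\qquad \|T(I-E_{\lambda})h\|^2=\int_{\lambda}^{\infty}t\,\dd\|E_{t}h\|^2.
\]
The key analytic input is that concavity of $\varphi^2$ together with $\varphi(0)=0$ forces $t\mapsto \varphi(t)^2/t$ to be monotonically decreasing, so $\varphi(t)^2\leq (\varphi(\lambda)^2/\lambda)\,t$ for all $t\geq\lambda$. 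Pulling the factor $\varphi(\lambda)^2/\lambda$ out of the integral yields
\[
\|\varphi(T^*T)(I-E_{\lambda})(\xdag-\sol)\|\leq \frac{\varphi(\lambda)}{\sqrt{\lambda}}\,\|T\xdag-T\sol\|,
\]
and Cauchy--Schwarz on the $\lsp w,\cdot\rsp$ pairing gives \eqref{eq:stability} with
\[
\sigma(\lambda):=\rho\,\frac{\varphi(\lambda)}{\sqrt{\lambda}},\qquad C:=0.
\]

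With these choices Theorem \ref{theo:general_strategy} produces the VSC \eqref{eq:VSC_innerprod} with
\[
\psi(t)=2\inf_{\lambda>0}\left[\rho^2\varphi(\lambda)^2+\rho\,\frac{\varphi(\lambda)}{\sqrt{\lambda}}\sqrt{t}\right].
\]
To finish, I would choose $\lambda$ so that the two summands balance, i.e.\ $\rho\sqrt{\lambda}\,\varphi(\lambda)=\sqrt{t}$, which is exactly $\lambda=\Theta^{-1}(\sqrt{t}/\rho)$. For this $\lambda$ each summand equals $\rho^2\varphi(\lambda)^2$, giving the claimed formula \eqref{eq:psi_spectral} with constant $4$. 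Setting $t=\delta^2$ yields $\psi(\delta^2)=4\rho^2\varphi(\Theta^{-1}(\delta/\rho))^2$.

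The only genuine obstacle is the pointwise comparison $\varphi(t)^2/t\leq\varphi(\lambda)^2/\lambda$ for $t\geq\lambda$ used inside the spectral integral; everything else is a bookkeeping exercise combining Cauchy--Schwarz, the spectral theorem, and an optimization in $\lambda$. The concavity of $\varphi^2$ (together with $\varphi^2(0)=0$) is precisely what is needed to ensure this monotonicity, and it is also what makes the resulting $\psi$ concave, as required for a genuine VSC.
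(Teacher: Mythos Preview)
Your proof is correct and follows essentially the same approach as the paper: the same spectral projections $P_\lambda=I-E_\lambda^{T^*T}$, the same verification of \eqref{eq:approximation} and \eqref{eq:stability} via the concavity-based monotonicity of $\varphi(t)^2/t$, and the same balancing choice $\lambda=\Theta^{-1}(\sqrt{t}/\rho)$ in the infimum from Theorem~\ref{theo:general_strategy}. There is nothing to add.
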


\begin{proof}
Let $E_{\projparam}=1_{[0,\projparam]}(T^*T)$ denote the spectral family generated by the operator $T^*T$ and 
set $P_{\projparam}:=I-E_{\projparam}$ for $\projparam>0$. 
Then 
\[
\|(I-P_{\projparam})\xdag\|^2 = \|E(\projparam)\varphi(T^*T)w\|^2 
= \int_0^{\projparam} \varphi(t)^2\,\mathrm{d}\|E_tw\|^2 
\leq \varphi(\projparam)^2\rho^2.
\]
Therefore, \eqref{eq:approximation} holds true with $\kappa(\projparam)=\rho\varphi(\projparam)$. 
Moreover, 
\begin{align*}
\lsp \xdag,P_{\projparam}(\xdag-\sol)\rsp 
&= \lsp w, P_{\projparam}\varphi(T^*T)(\xdag-\sol)\rsp 
\leq \rho \paren{\int_{\projparam}^{\infty} \varphi(t)^2\,\mathrm{d}\|E_{t}(\xdag-\sol)\|^2}^{1/2} \\
&\leq \rho \paren{\sup_{t\geq \projparam}\frac{\varphi(t)^2}{t}
\int_{\rho}^{\infty} t\mathrm{d}\|E_{t}(\xdag-\sol)\|^2}^{1/2} 
\leq \rho \frac{\varphi(\projparam)}{\sqrt{\projparam}} \|T(\xdag-\sol)\|
\end{align*}
where $\sup_{t\geq \projparam}\varphi(t)^2{t}=\varphi(\projparam)^2/\projparam$  since 
$\varphi^2$ is concave and $\varphi(0)=0$. 
Hence,  \eqref{eq:stability} holds true with $\sigma(\projparam) = \rho\varphi(\projparam)/\sqrt{\projparam}$ 
and $C=0$. Therefore, \eqref{eq:VSC_innerprod} holds true with 
\[
\psi(\delta)  = 2\inf_{\projparam>0}\left[\rho^2\varphi(\projparam)^2 
+\frac{\varphi(\projparam)}{\sqrt{\projparam}}\rho \delta \right]
= 2\rho^2 \inf_{\projparam>0}\left[\varphi(\projparam)^2 
+\frac{\varphi(\projparam)}{\sqrt{\projparam}}\frac{\delta}{\rho} \right].
\]
We choose $\projparam= \Theta^{-1}(\delta/\rho)$, i.e.\ $\sqrt{\projparam}\varphi(\projparam)=\delta/\rho$. 
Then $\frac{\varphi(\projparam)}{\sqrt{\projparam}}\frac{\delta}{\rho}
= \varphi(\projparam)^2= \varphi(\Theta^{-1}(\delta/\rho))^2$, so we obtain \eqref{eq:psi_spectral}. 
\end{proof}



\bibliographystyle{abbrv}
\bibliography{besovlit}

\end{document}